 \numberwithin{equation}{section}
\begin{document}
\renewcommand{\theequation}
{\thesection.\arabic{equation}}
\newtheorem{Pa}{Paper}[section]
\newtheorem{theorem}[Pa]{{\bf Theorem}}
\newtheorem{lemma}[Pa]{{\bf Lemma}}
\newtheorem{definition}[Pa]{{\bf Definition}}
\newtheorem{corollary}[Pa]{{\bf Corollary}}
\newtheorem{Ob}[Pa]{{\bf Observation}}
\newtheorem{remark}[Pa]{{\bf Remark}}
\newtheorem{proposition}[Pa]{{\bf Proposition}}
\newtheorem{problem}[Pa]{{\bf Problem}}
\newtheorem{Ee}[Pa]{{\bf Exercise}}
\newtheorem{example}[Pa]{{\bf Example}}
\newtheorem{Qn}[Pa]{{\bf Question}}
\newtheorem{Ft}[Pa]{{\bf Fact}}
\newtheorem{Hyp}[Pa]{{\bf Hypothesis}}

\newcommand{\f}{w}
\newcommand{\num}{S}
\newcommand{\den}{B}
\newcommand{\hE}{G}
\newcommand{\s}{{\mathbb S}}
\newcommand{\bgam}{\boldsymbol{\gamma}}
\newcommand{\hC}{Y}  
\newcommand{\hTheta}{\widehat{\Theta}}
\newcommand{\cS}{{\mathcal {S}}}
\newcommand{\cR}{{\mathcal {RS}}}
\newcommand{\cB}{{\mathcal {B}}}
\newcommand{\cE}{{\mathcal {E}}}
\newcommand{\PR}{{\bf IP}_{\kappa}}
\renewcommand{\theequation}{\thesection.\arabic{equation}}
\newcommand{\C}{{\mathbb C}}
\newcommand{\D}{{\mathbb D}}
\newcommand{\T}{{\mathbb T}}
\newcommand{\N}{{\mathbb N}}
\newcommand{\PP}{\mathbb P}
\newcommand{\bk}{{\mathbf k}}
\newcommand{\bl}{{\boldsymbol\ell}}
\newcommand{\tC}{\widetilde{C}}
\newcommand{\tP}{\widetilde{P}}
\newcommand{\tT}{\widetilde{T}}
\newcommand{\tE}{\widetilde{E}}
\newcommand{\bn}{{\mathbf n}}
\newcommand{\bd}{{\mathbf d}}
\newcommand{\sbm}[1]{\left[\begin{smallmatrix} #1
                \end{smallmatrix}\right]}

\title[Boundary interpolation]
{Boundary interpolation by finite Blaschke products} 
\author{Vladimir Bolotnikov}
\address{Department of Mathematics\\
The College of William and Mary \\
Williamsburg, VA 23187-8795\\
USA}
\email{vladi@math.wm.edu}

\keywords{Blaschke product, interpolation}
\subjclass[2010]{30E05} 

\begin{abstract}
Given $n$ distinct points $t_1,\ldots,t_n$ on the unit circle $\T$ and equally many target values 
$\f_1,\ldots,\f_n\in\T$, we describe all Blaschke products $f$ of degree at most $n-1$ 
such that 
$f(t_i)=\f_i$ for $i=1,\ldots,n$. We also describe the cases where degree $n-1$ is the minimal possible.
\end{abstract}
\maketitle

\section{Introduction}
\setcounter{equation}{0} 

Let $\mathcal S$ denote the Schur class of all analytic functions mapping the open unit 
disk $\D=\{z\in\C: \, |z|<1\}$ 
into the closed unit disk $\overline{\D}$. We denote by $\mathcal{RS}_k$ the set of 
all rational $\cS$-class functions of degree at most $k$. The functions $f\in\mathcal{RS}_k$ that are 
unimodular on the unit circle $\T$ are necessarily of the form 
$$
f(z)=c\cdot \prod_{i=1}^k\frac{z-a_i}{1-z\overline{a}_i},\qquad |c|=1, \; |a_i|<1
$$
and are called {\em finite Blaschke products}. They can be characterized as Schur-class functions
that extend to a mapping from $\overline{\D}$ onto itself and then the degree $k=\deg f$ can be 
interpreted geometrically as the winding number of the image $f(\T)$ of the unit circle $\T$ about the 
origin (i.e., the map $f:\, \overline{\D}\to\overline{\D}$ is $k$-to-$1$). We will write $\cB_k$
for the set of all Blaschke products of degree at most $k$, and we will use 
notation $\cB^\circ_k:=\cB_k\backslash\cB_{k-1}$ for the set of Blaschke products of degree $k$.

\smallskip

Given points $z_1,\ldots,z_n\in\D$ and target values $w_1,\ldots,w_n\in\overline{\D}$, the classical 
{\em Nevanlinna-Pick problem} consists of finding a function $f\in\cS$ such that
\begin{equation}
f(z_i)=w_i\quad\mbox{for}\quad i=1,\ldots,n\quad (z_i\in\D, \; w_i\in\overline{\D}).
\label{1.1}
\end{equation}
The problem has a solution if and only if the matrix 
$P=\left[\frac{1-w_i\overline{w}_j}{1-z_i\overline{z}_j}\right]_{i,j=1}^n$ is 
positive semidefinite ($P\ge 0$) \cite{pick, nevan1}. If $\det P=0$, the problem
has a unique solution which is a finite Blaschke product of degree $k={\rm rank} \, P$. If $P$ is 
positive definite ($P>0$), the problem is {\em indeterminate} (has infinitely many solutions), and its 
solution set admits a linear-fractional parametrization with free Schur-class parameter.
When the parameter runs through the class $\cB^\circ_\kappa$, the parametrization formula produces all 
Blaschke-product solutions to the problem \eqref{1.1} of degree $n+\kappa$ for each fixed $\kappa\ge 0$.

\smallskip

If the problem \eqref{1.1} is indeterminate, it has no solutions in $\cB_{n-1}$. However,
it still has solutions in $\mathcal{RS}_{n-1}$ ({\em low-degree solutions}).
In case not all target values $w_i$'s are the same, the problem has infinitely many low-degree 
solutions which can be parametrized by
polynomials $\sigma$ with $\deg \sigma< n$ and with all the roots outside $\D$; see 
\cite{blgm}, \cite{geor}, \cite{geor1}.
More precisely, for every such $\sigma$, there exists a  unique (up to a common
unimodular  constant factor) pair of polynomials $a(z)$ and $b(z)$, each of degree
at most $n-1$  and such that
\begin{enumerate}
\item $|a(z)|^2-|b(z)|^2=|\sigma(z)|^2$ for $|z|=1$ and
\item the function $f=b/a$ (which belongs to $\mathcal{RS}_{n-1}$ by part (1))
satisfies \eqref{1.1}.
\end{enumerate}
The question of finding a rational solution of the minimal possible degree  $k_{\rm min}$ (and even 
finding the value of  $k_{\rm min}$) is still open.

\smallskip

The boundary version of the Nevanlinna-Pick problem interpolates preassigned values $\f_1,\ldots,\f_n$
(interpreted as nontangential boundary limits in the non-rational case) at finitely many 
points $t_1,\ldots,t_n$ on the unit circle $\T$. Obvious necessary conditions $|\f_i|\le 1$
($1\le i\le n$) turn out to be sufficient, and a solvable problem is always indeterminate. 
If at least one of the preassigned boundary values $\f_i$ is not unimodular, the problem cannot 
be solved by a finite Blaschke product. However, as in the classical ``interior" case, the problem admits
infinitely many low-degree rational solutions $f\in\mathcal{RS}_{n-1}$ (unless all target values $\f_i$'s 
are equal to each other); see e.g., \cite{bc}. As for now, the description of all low-degree solutions 
is not known.

\smallskip

In this paper we will focus on the ``boundary-to-boundary" Nevanlinna-Pick problem where all 
preassigned boundary values are unimodular. This problem can be solved by a finite Blaschke product.
\begin{theorem}
Given any points $t_1,\ldots,t_n\in\T$ and  $\f_1,\ldots,\f_n\in\T$, there exists a finite Blaschke
product $f\in\cB_{n-1}$ such that
\begin{equation}
f(t_i)=\f_i\quad\mbox{for}\quad i=1,\ldots,n \quad (t_i, \, \f_i\in\T).
\label{1.2}
\end{equation}
Any rational function $f\in\mathcal{RS}_{n-1}$ satisfying conditions \eqref{1.2} is necessarily a 
Blaschke product (i.e., $f\in \cB_{n-1}$).
\label{T:1.1}
\end{theorem}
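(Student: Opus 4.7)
The two assertions can be handled separately. For the second assertion, the idea is a Fej\'er-Riesz argument. Write $f=b/a$ with $a,b$ coprime polynomials of degree at most $n-1$; since $f\in\cS$, the denominator $a$ is zero-free on $\overline{\D}$. Then $T(z):=|a(z)|^2-|b(z)|^2$ is nonnegative on $\T$ and agrees there with a trigonometric polynomial of degree at most $n-1$, and the hypothesis $|\f_i|=1$ forces $T(t_i)=0$ for $i=1,\dots,n$. By Fej\'er-Riesz, $T=|q|^2$ on $\T$ for some polynomial $q$ of degree at most $n-1$; this $q$ would then vanish at the $n$ distinct points $t_i$, forcing $q\equiv 0$. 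Hence $|a|=|b|$ on $\T$ and $f$ is a Blaschke product of degree at most $n-1$.

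For the first assertion, if all $\f_i$ coincide then $f\equiv\f_1\in\cB_0$ works. Otherwise I would apply the boundary Nevanlinna-Pick theorem with free angular-derivative parameters. Form the Hermitian matrix
\[
P(\gamma)=\bigl[p_{ij}\bigr]_{i,j=1}^n,\qquad p_{ii}=\gamma_i,\qquad p_{ij}=\frac{1-\f_i\overline{\f}_j}{1-t_i\overline{t}_j}\ \ (i\ne j),
\]
depending on a parameter $\gamma=(\gamma_1,\dots,\gamma_n)\in\mathbb R^n_{>0}$. The singular-case boundary Nevanlinna-Pick theorem asserts that whenever $P(\gamma)\succeq 0$ has rank $k$, the augmented problem (the data \eqref{1.2} together with prescribed angular derivatives $\gamma_i$ at $t_i$) has a unique Schur-class solution, and that solution is a Blaschke product of degree exactly $k$. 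It thus suffices to produce $\gamma^*\in\mathbb R^n_{>0}$ with $P(\gamma^*)\succeq 0$ singular; the associated Blaschke product then lies in $\cB_{n-1}$ and satisfies \eqref{1.2}.

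To produce $\gamma^*$ I argue by convexity and continuity. The set $\Omega:=\{\gamma\in\mathbb R^n_{>0}:P(\gamma)\succeq 0\}$ is convex and nonempty (take each $\gamma_i$ large enough so that the diagonal dominates). Choose an index $i$ with $\f_i\ne\f_j$ for some $j$---possible by assumption---and from an interior point of $\Omega$ decrease $\gamma_i$ continuously; by monotonicity the smallest eigenvalue of $P$ decreases, so $P$ becomes singular at some first critical value. Were this value $\gamma_i=0$, the resulting $P\succeq 0$ would have a zero $(i,i)$-entry, hence vanishing $i$-th row and column, which would force $\f_i=\f_j$ for all $j$---a contradiction. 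So the critical value is strictly positive and gives the required $\gamma^*$. The main technical input is the singular-case boundary Nevanlinna-Pick theorem---specifically, the statement that the unique Schur solution is a finite Blaschke product whose degree equals $\operatorname{rank}P(\gamma^*)\le n-1$; granted this classical result, the rest of the argument is elementary.
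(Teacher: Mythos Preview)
Your argument for the second assertion (the Fej\'er--Riesz argument) is correct and self-contained; the paper itself does not prove this part but only cites \cite{bc}, so here you actually supply more than the paper does.

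For the existence part, you and the paper use the same key ingredient---Theorem~\ref{T:2.2}(2), the singular case of the boundary Nevanlinna--Pick problem---but you reach the singular positive semidefinite Pick matrix differently. The paper (Remark~\ref{R:2.3}) chooses $\gamma_1,\dots,\gamma_{n-1}$ large enough to make $P_{n-1}>0$ and then \emph{defines} $\gamma_n:=F P_{n-1}^{-1}F^*$ via the Schur complement, so that $P_n\succeq 0$ is singular by construction. Your route---fix all coordinates large, then slide one $\gamma_i$ downward until the smallest eigenvalue hits zero, and rule out $\gamma_i=0$ because a vanishing diagonal entry in a psd matrix forces the entire row to vanish---is a legitimate alternative and arguably more conceptual, though less explicit. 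One small imprecision: what Theorem~\ref{T:2.2}(2) actually guarantees in the singular case is a unique $f\in\cB_k$ satisfying the \emph{relaxed} conditions $f(t_i)=\f_i$ and $|f'(t_i)|\le\gamma_i$ (not the exact augmented problem you describe); but this still yields $f\in\cB_{n-1}$ satisfying \eqref{1.2}, so your conclusion stands.
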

The existence of a finite Blaschke product satisfying conditions \eqref{1.2} was first confirmed
in \cite{cph} with no estimates for the minimal possible degree $k_{\rm min}$ of $f$. The estimate 
$k_{\rm min}\le n^2-n$ was obtained in \cite{youn}. Ruscheweyh and Jones \cite{rujo} improved this estimate to 
$k_{\rm min}\le n-1$, which is sharp for some problems. Several different approaches to constructing 
interpolants in $\cB_{n-1}$ have been presented in \cite{gl, sw, gorh}. The paper \cite{sw}
also contains interesting results concerning minimal degree solutions. The fact that all low-degree 
solutions to the problem \eqref{1.2} are necessarily Blaschke products was established in \cite{bc}.

\smallskip

If $\f_i=\f\in\T$ for $i=1,\ldots,n$, then the constant function $f(z)\equiv \f$ is the 
only $\cB_{n-1}$-solution 
to the problem; it is not hard to show that such a problem has no other {\em rational} solutions of degree less than 
$n$. Otherwise (i.e., when at least two target values are distinct), the set of all $\cB_{n-1}$-solutions 
(or, which is the same, the set of all $\mathcal{RS}_{n-1}$-solutions) to the problem \eqref{1.2} is infinite. 
The parametrization of this set is presented in Theorem \ref{T:4.5}, the main result of the paper. In Theorem 
\ref{T:4.5a} we give a slightly different parametrization formula whch is then used in Section 4 to 
characterize the problems \eqref{1.2} having no Blaschke product solutions of degree less than $n-1$.

\section{The modified interpolation problem}
\setcounter{equation}{0}

Given a finite Blaschke product $f$ and a collection ${\bf t}=\{t_1,\ldots,t_n\}$ of distinct points in 
$\overline{\D}$, let us define the associated {\em Schwarz-Pick matrix} 
$$
P^f({\bf t})=\left[p^f(t_i,t_j)\right]_{i,j=1}^n
$$
by entry-wise formulas
\begin{equation}
p^f(t_i,t_j)=\left\{\begin{array}{cll}
t_if^\prime(t_i)\overline{f(t_i)}& \mbox{if} \; i=j \; \; \mbox{and} \; \; t_i\in\T,
\vspace{2mm}\\
{\displaystyle\frac{1-f(t_i)\overline{f(t_j)}}{1-t_i\overline{t}_j}} &
\mbox{otherwise}.\end{array}\right.
\label{2.1}
\end{equation}
In case ${\bf t}\subset\T$, we will refer to $\; P^f({\bf t})$ as to the {\em boundary Schwarz-Pick 
matrix}. Observe that for a finite Blaschke product $f$, equalities
\begin{equation}
\lim_{z\to t}\frac{1-|f(z)|^2}{1-|z|^2}=tf^\prime(t)\overline{f(t)}=|f^\prime(t)|
\label{2.2}
\end{equation}
hold at every point $t\in\T$ and thus, the diagonal entries in $P^f({\bf t})$ are all nonnegative.
The next result is well-known (see e.g. \cite[Lemma 2.1]{bpams} for the proof).
\begin{lemma}
For $f\in\cB^\circ_k$ and a tuple ${\bf t}=\{t_1,\ldots,t_n\}\in \overline{\D}^n$, 
the matrix $P^f({\bf t})$ \eqref{2.1} is positive semidefinite and
${\rm rank} \, P^f({\bf t})={\rm min}\{n,k\}$.
\label{L:2.1}
\end{lemma}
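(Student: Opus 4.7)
My plan is to factor the Schwarz--Pick kernel as a finite sum of rank-one kernels and then read off both positive semidefiniteness and the exact rank from the factorization. The starting point will be the reproducing kernel identity, valid for $z,w\in\D$,
$$\frac{1-f(z)\overline{f(w)}}{1-z\overline{w}}=\sum_{j=0}^{k-1}e_j(z)\overline{e_j(w)},$$
where $\{e_j\}_{j=0}^{k-1}$ is an orthonormal basis of the $k$-dimensional model space $H^2\ominus fH^2$. Writing $f=p/q$ with $q$ of degree $k$ and all zeros of $q$ outside $\overline{\D}$, each $e_j$ has the form $r_j/q$ with $\deg r_j\le k-1$, so every $e_j$ extends analytically across $\T$ and both sides of the identity above extend continuously to $\overline{\D}\times\overline{\D}$; the diagonal limit at a boundary point $t\in\T$ is exactly the value recorded in \eqref{2.2}.

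Next I will form the $n\times k$ matrix $E$ with entries $E_{i,j+1}=e_j(t_i)$. The factorization then gives $P^f({\bf t})=EE^*$, so that $P^f({\bf t})\ge 0$ and $\mathrm{rank}\,P^f({\bf t})=\mathrm{rank}\,E\le\min\{n,k\}$ are immediate.

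The heart of the argument, and the step I expect to require the most care, is the matching lower bound on the rank. I will split into two cases. If $n\le k$, I will show the rows of $E$ are linearly independent by exhibiting, for each index $i_0$, the explicit element
$$h_{i_0}(z)=\frac{\prod_{i\ne i_0}(z-t_i)}{q(z)}$$
of $H^2\ominus fH^2$, whose numerator has degree $n-1\le k-1$; this function vanishes at every $t_i$ with $i\ne i_0$ but is nonzero at $t_{i_0}$ (since $q$ has no zeros in $\overline{\D}$ and the $t_i$ are distinct), which forces any row-annihilating vector to vanish. If $n>k$, I will instead argue that the columns of $E$ are linearly independent: any vanishing combination $h=\sum_j d_j e_j$ would lie in $H^2\ominus fH^2$ and have $n\ge k$ distinct zeros in $\overline{\D}$, yet its numerator is a polynomial of degree at most $k-1$, forcing $h\equiv 0$ and hence $d=0$.

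One subtle point throughout is that pointwise evaluation at boundary points is not a bounded functional on $H^2$ in general. The remedy is that every element of $H^2\ominus fH^2$ is rational with denominator $q$ nonvanishing on $\overline{\D}$, so evaluation on $\T$ simply becomes evaluation of a continuous function; this is what allows both the continuous extension of the kernel identity and the linear-independence arguments above to be carried out uniformly for interior and boundary nodes.
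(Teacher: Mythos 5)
Your argument is correct and complete. Note first that the paper itself does not prove Lemma \ref{L:2.1}: it states the result as well known and refers the reader to \cite[Lemma 2.1]{bpams}, where the standard route goes through interior Schwarz--Pick matrices, their Stein identities, and a limiting argument to reach boundary nodes. Your proof is a genuinely self-contained alternative: you factor $P^f({\bf t})=EE^*$ through the $k$-dimensional model space $H^2\ominus fH^2$, which gives positive semidefiniteness and the upper bound ${\rm rank}\le\min\{n,k\}$ for free, and then you obtain the exact rank by exhibiting explicit elements of the model space ($h_{i_0}$ when $n\le k$, and the degree count on numerators when $n>k$). This is arguably the cleanest way to see the rank equality, and your handling of the boundary nodes is sound: since every element of the model space is $r/q$ with $q$ zero-free on $\overline{\D}$, the kernel identity extends continuously to $\overline{\D}\times\overline{\D}$ off the diagonal singularity, and the diagonal boundary entries match $t_if'(t_i)\overline{f(t_i)}$ by \eqref{2.2}. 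Two harmless inaccuracies worth fixing in a final write-up: $q$ need not have degree exactly $k$ (e.g.\ $f(z)=z^k$ gives $q\equiv1$), though all that your argument uses is $\deg r_j\le k-1$ and $q\neq0$ on $\overline{\D}$; and in the case $n>k$ you should say the numerator has $n\ge k$ distinct zeros while having degree at most $k-1$, which already forces it to vanish identically.
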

Boundary interpolation by Schur-class functions and, in particular,
by finite Blaschke products becomes more transparent if, in addition to 
conditions \eqref{1.2}, one prescribes the values of $f^\prime$ at each interpolation node $t_i$. 
We denote this modified problem by {\bf MP}.

\medskip 

{\bf MP}: {\em Given $t_i, f_i\in \T$ and
$\gamma_i\ge 0$, find a finite Blaschke product $f$ such that
\begin{equation}
f(t_i)=\f_i,\quad |f^\prime(t_i)|=\gamma_i\quad\mbox{for}\quad i=1,\ldots,n \quad (t_i, \, \f_i\in\T, 
\; \gamma_i\ge 0).
\label{2.3}
\end{equation}}
The problem \eqref{2.3} is well-known in a more general
context of rational functions $f:\D\to \D$ (\cite{bgr}) and even in the more general context 
Schur-class functions (\cite{Sarasonnp, bkiwota}). The results on finite Blaschke product interpolation 
presented in Theorem \ref{T:2.2} below are easily derived from the general ones. 
\begin{theorem}
If the problem {\bf MP} has a solution, then the {\rm Pick matrix} $P_n$ of the problem defined as
\begin{equation}
P_n=\left[p_{ij}\right]_{i,j=1}^n,\quad\mbox{where}\quad
p_{ij}=\left\{\begin{array}{cll}{\displaystyle\frac{1-\f_i\overline{\f}_j}{1-t_i\overline{t}_j}} &
\mbox{if} &  i\neq j, \vspace{1mm}\\
\gamma_i & \mbox{if} & i=j,\end{array}\right.
\label{2.4}   
\end{equation}
is positive semidefinite. Moreover,
\begin{enumerate}
\item If $P_n>0$, then the problem {\bf MP} has infinitely many solutions.
\item If $P_n\ge 0$ and ${\rm rank} \, (P_n)=k<n$, then there is an $f\in\cB_{k}$ such that 
\begin{equation}
f(t_i)=\f_i,\quad |f^\prime(t_i)|\le\gamma_i\quad\mbox{for}\quad i=1,\ldots,n
\label{2.5}
\end{equation}
and no other rational function $f: \, \D\to \D$ meets conditions \eqref{2.5}.
\end{enumerate}
\label{T:2.2}
\end{theorem}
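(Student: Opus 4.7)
The plan is to reduce the three assertions to Lemma \ref{L:2.1} together with the well-developed general theory of boundary Nevanlinna-Pick interpolation with derivative data cited in the statement (\cite{bgr,Sarasonnp,bkiwota}).

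For necessity of $P_n\ge 0$, suppose $f\in\cB_k$ satisfies \eqref{2.3}. By Lemma \ref{L:2.1} the boundary Schwarz-Pick matrix $P^f({\bf t})$ is positive semidefinite. Its off-diagonal entries agree with those of $P_n$ by \eqref{2.1} and \eqref{2.4}, while the first identity in \eqref{2.2} gives
$$
t_if^\prime(t_i)\overline{f(t_i)}=|f^\prime(t_i)|=\gamma_i=p_{ii}
$$
on the diagonal. Hence $P_n=P^f({\bf t})\ge 0$.

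For part (1), I would invoke the standard linear-fractional parametrization of all Schur-class solutions of the modified boundary problem in the indeterminate case $P_n>0$ (see \cite{Sarasonnp,bkiwota}). Restricting the free Schur-class parameter to range through $\cB^\circ_\kappa$ for each $\kappa\ge 0$ produces infinitely many Blaschke-product solutions of degree $n+\kappa$, in complete analogy with the interior case recalled in Section 1.

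For part (2), assume ${\rm rank}\, P_n=k<n$. For any rational $f:\D\to\overline{\D}$ satisfying \eqref{2.5}, the matrix $P^f({\bf t})$ shares the off-diagonals of $P_n$ and has diagonal entries $|f^\prime(t_i)|\le\gamma_i$; hence $P_n-P^f({\bf t})$ is diagonal with nonnegative entries and in particular positive semidefinite. Combined with Lemma \ref{L:2.1}, this yields the sandwich $0\le P^f({\bf t})\le P_n$, so $\ker P_n\subseteq \ker P^f({\bf t})$ and therefore ${\rm rank}\, P^f({\bf t})\le k$. Lemma \ref{L:2.1} then forces $\deg f\le k$, i.e.\ $f\in\cB_k$; tracking equality $v^*(P_n-P^f({\bf t}))v=0$ for $v\in\ker P_n$ also gives $|f^\prime(t_i)|=\gamma_i$ at each $i$ lying in the support of the kernel. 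Existence and uniqueness of such an $f$ then follow from the determinate nature of the singular-Pick-matrix case in the general framework, via the Ball-Gohberg-Rodman construction applied to $P_n$.

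The main obstacle is the rigidity assertion in part (2): the Pick-matrix argument sketched above extracts the bound $\deg f\le k$ from the kernel of $P_n$ in a routine way, but the uniqueness of $f$ within $\cB_k$ (and its explicit description) must be imported from the general determinate-case theory rather than derived from Lemma \ref{L:2.1} alone. Everything else is a direct transcription of the classical Schwarz-Pick/interpolation machinery into the boundary-to-boundary setting.
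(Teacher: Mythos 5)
Your proposal matches the paper's own treatment: the paper likewise establishes necessity by observing that any solution of {\bf MP} has $P^f({\bf t})=P_n$ and invoking Lemma \ref{L:2.1}, and it derives parts (1) and (2) by citing the general interpolation framework of \cite{bgr}, \cite{Sarasonnp}, \cite{bkiwota} rather than proving them ab initio. Your additional sandwich argument $0\le P^f({\bf t})\le P_n$ for the degree bound in part (2) is a correct small supplement, but the overall route --- Schwarz-Pick identification plus appeal to the determinate/indeterminate dichotomy of the general theory --- is the same as the paper's.
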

\begin{remark}
{\rm The second statement in Theorem \ref{T:2.2} suggests a short proof of the first part of Theorem 
\ref{T:1.1}. Indeed, given $f_1,\ldots, f_n$, choose positive numbers $\gamma_1,\ldots,\gamma_{n-1}$ large
enough so that the Pick matrix $P_{n-1}$ defined via formula \eqref{2.4} is positive definite and 
extend $P_{n-1}$ to $P_n=\sbm{P_{n-1} & F \\ F^* & \gamma_n}$ by letting
$$
\gamma_n:=FP_{n-1}^{-1}F^*,\quad\mbox{where}\quad F=\begin{bmatrix}p_{n,1} & \ldots &
p_{n,n-1}\end{bmatrix}, \quad p_{n,j}=\frac{1-\f_n\overline{\f}_j}{1-t_n\overline{t}_j}.
$$
By the Schur complement argument, the matrix $P_n$ constructed as above is singular. By
Theorem \ref{T:2.2}, there is a finite Blaschke product $f\in \cB^\circ_{n-1}$ satisfying conditions 
\eqref{2.5}. Clearly, this $f$ solves the problem \eqref{1.2}.}
\label{R:2.3}
\end{remark}
It is seen from \eqref{2.1}--\eqref{2.4}, that for  every solution $f$ to the problem {\bf MP},
the Schwarz-Pick matrix $P^f({\bf t})$ based on the interpolation nodes $t_1,\ldots,t_n$ is equal to 
the matrix $P_n$ constructed in \eqref{2.4} from the interpolation data set. Hence, the first statement
in Theorem \ref{T:2.2} follows from Lemma \ref{L:2.1}. The last statement in the theorem can be 
strengthened as follows: {\em if $P_n$ is singular, there is an $f\in\cB_{k}$ satisfying conditions 
\eqref{2.5}, but there is no another Schur-class function $g$ (even non-rational) such that 
$$
\lim_{r\to 1}g(rt_i)=\f_i,\quad |\lim_{r\to 1}g^\prime(rt_i)|\le \gamma_i\quad\mbox{for}\quad 
i=1,\ldots,n.
$$}
In any event, if $P_n$ is singular, there is only one candidate which may or may not be a 
solution to the problem {\bf MP}. The determinacy criterion for the problem {\bf MP} in terms of the 
Pick matrix $P_n$ is recalled in Theorem \ref{T:2.4} below.
\begin{definition}
{\rm A positive semidefinite matrix of rank $r$ is called {\em
saturated} if every its $r\times r$ principal submatrix is positive definite. A positive semidefinite matrix
is called {\em minimally positive} if none of its diagonal entries can be decreased so that the modified matrix
will be still positive semidefinite}.
\label{D:2.3}
\end{definition}
\begin{remark}
{\rm The rank equality in Lemma \ref{L:2.1} implies that for $f\in\cB^\circ_k$ and $k<n$, the 
Schwarz-Pick matrix \eqref{2.1} is saturated}. 
\label{R:2.4}
\end{remark}
\begin{theorem}[\cite{Sarasonnp}]
The problem {\bf MP} has a unique solution if and only if $P_n$ is minimally positive or equivalently,
if and only if $P_n\ge 0$ is singular and saturated. The unique solution is a finite 
Blaschke product of degree equal the rank of $P_n$.
\label{T:2.4}
\end{theorem}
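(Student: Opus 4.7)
My plan is to combine Theorem \ref{T:2.2}(2) with its strengthening (the ``only candidate'' statement displayed right after Theorem \ref{T:2.2}) and with the rank information from Lemma \ref{L:2.1}. The idea is: among all Schur-class functions satisfying the relaxed conditions \eqref{2.5}, Theorem \ref{T:2.2} already singles out a distinguished candidate, and the Schwarz--Pick matrix of that candidate detects exactly when the inequalities in \eqref{2.5} become equalities.

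First I would dispose of the positive-definite case: Theorem \ref{T:2.2}(1) gives infinitely many solutions whenever $P_n>0$, so uniqueness forces $P_n$ to be singular. Setting $k=\mathrm{rank}\,P_n<n$, Theorem \ref{T:2.2}(2) supplies a unique $f\in\cB_k$ satisfying \eqref{2.5}, and the strengthened version rules out any other Schur-class candidate even for the relaxed problem. Thus $f$ is the only possible solution of {\bf MP}, and it is a solution precisely when $|f'(t_i)|=\gamma_i$ for all $i$. To rephrase this as a condition on $P_n$, I would compare $P^f({\bf t})$ (positive semidefinite by Lemma \ref{L:2.1}) with $P_n$: since $f(t_i)=\f_i$, the off-diagonal entries coincide, so $D:=P_n-P^f({\bf t})$ is diagonal with nonnegative entries $D_{ii}=\gamma_i-|f'(t_i)|$. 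The decomposition $P_n=P^f({\bf t})+D$ then shows that each diagonal entry $(P_n)_{ii}$ may be decreased by $D_{ii}$ without losing positive semidefiniteness; consequently, $P_n$ minimally positive forces $D=0$, and then $f$ itself solves {\bf MP}. Conversely, if {\bf MP} has any solution $g$, then $P^g({\bf t})=P_n$, and Lemma \ref{L:2.1} gives $\deg g=k$, so $g\in\cB^\circ_k$ and Remark \ref{R:2.4} tells us that $P_n$ is saturated.

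It remains to reconcile ``minimally positive'' with ``singular and saturated''. For a singular PSD matrix of rank $r$, saturation means every $r$-subset of columns is independent; equivalently, for each $i$ the column $v_i$ lies in the span of the remaining columns, which produces a nonzero vector in $\ker P_n$ with nonvanishing $i$-th coordinate and thereby obstructs any decrease of $(P_n)_{ii}$. This gives ``saturated $\Rightarrow$ minimally positive'' at the level of abstract PSD matrices. The reverse implication is the point that uses the Pick-matrix structure: a minimally positive $P_n$ is automatically singular (otherwise $P_n\ge\delta I$ for some $\delta>0$ allows decreases), and by the argument above it coincides with the Schwarz--Pick matrix of some $f\in\cB^\circ_k$, hence is saturated by Remark \ref{R:2.4}. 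The degree statement in the theorem is then immediate from $P_n=P^f({\bf t})$ together with Lemma \ref{L:2.1}. The main conceptual obstacle is precisely this reverse direction: ``minimally positive $\Rightarrow$ saturated'' fails for general PSD matrices (block-diagonal examples are easy to construct), so one cannot avoid invoking the interpolation construction of Theorem \ref{T:2.2} here.
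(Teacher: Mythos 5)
The paper does not prove Theorem \ref{T:2.4} at all: it is quoted from \cite{Sarasonnp} as a known determinacy criterion, so there is no in-paper argument to compare yours against. That said, your derivation from the surrounding material (Theorem \ref{T:2.2} and its strengthening, Lemma \ref{L:2.1}, Remark \ref{R:2.4}) is correct and self-contained. The two load-bearing steps both check out. First, the decomposition $P_n=P^f(\mathbf{t})+D$ with $D=\mathrm{diag}(\gamma_i-|f'(t_i)|)\ge 0$ is legitimate because the off-diagonal entries of $P^f(\mathbf{t})$ depend only on the interpolated values $f(t_i)=\f_i$, and since $P_n-D_{ii}\,{\bf e}_i{\bf e}_i^*\ge P^f(\mathbf{t})\ge 0$, minimal positivity does force $D=0$, so the unique candidate from Theorem \ref{T:2.2}(2) actually attains the equalities $|f'(t_i)|=\gamma_i$. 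Second, your reduction of ``saturated and singular $\Rightarrow$ minimally positive'' to producing, for each $i$, a kernel vector of $P_n$ with nonzero $i$-th coordinate is sound: writing $P_n=V^*V$ with $V$ of full row rank $r<n$, saturation says every $r$ columns of $V$ are independent, so $v_i$ lies in the span of the other $n-1\ge r$ columns, and the resulting relation vector $c$ (with $c_i=-1$) satisfies $c^*(P_n-\epsilon\,{\bf e}_i{\bf e}_i^*)c=-\epsilon<0$ for any $\epsilon>0$. Your observation that the converse implication ``minimally positive $\Rightarrow$ saturated'' fails for general PSD matrices (e.g.\ the block-diagonal matrix with two copies of $\sbm{1&1\\1&1}$) and therefore must be routed through the interpolation construction is exactly right and is the one genuinely non-formal point of the proof. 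One small remark: for uniqueness you do not actually need the strengthened (non-rational Schur class) version of Theorem \ref{T:2.2}(2), since solutions of {\bf MP} are by definition finite Blaschke products and hence rational self-maps of $\D$, but invoking it does no harm.
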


\section{Parametrization of the set of low-degree solutions}
\setcounter{equation}{0}

With the first $n-1$ interpolation conditions in \eqref{1.2} we associate the matrices 
\begin{equation}
T=\begin{bmatrix}t_1 && 0\\ &\ddots & \\ 0 && t_{n-1}\end{bmatrix},\quad M=\begin{bmatrix}\f_1 \\ \vdots \\
\f_{n-1}\end{bmatrix},\quad E=\begin{bmatrix}1 \\ \vdots \\ 1\end{bmatrix}.
\label{4.1}
\end{equation}
\begin{definition}
{\rm A tuple $\bgam=\{\gamma_1,\ldots,\gamma_{n-1}\}$ of positive numbers will be called {\em admissible}
if the Pick matrix $P_{n-1}^{\bgam}$ defined as in \eqref{2.4} is positive definite:
\begin{equation}
P^{\bgam}_{n-1}=\left[p_{ij}\right]_{i,j=1}^{n-1}>0,\quad\mbox{where}\quad
p_{ij}=\left\{\begin{array}{cll}{\displaystyle\frac{1-\f_i\overline{\f}_j}{1-t_i\overline{t}_j}} &
\mbox{if} &  i\neq j, \vspace{1mm}\\
\gamma_i & \mbox{if} & i=j.\end{array}\right.
\label{4.2}
\end{equation}}
\label{D:4.1}
\end{definition}
From now on, the tuple $\bgam=\{\gamma_1,\ldots,\gamma_{n-1}\}$ will serve as a parameter.
Observe the Stein identity \begin{equation}
P^{\bgam}_{n-1}-TP^{\bgam}_{n-1}T^*=EE^*-MM^*,
\label{4.3}   
\end{equation}
which holds true for  any choice of $\bgam=\{\gamma_1,\ldots,\gamma_{n-1}\}\in{\mathbb R}^{n-1}$.
In what follows, ${\bf e}_i$ will denote the $i$-th
column in the identity matrix $I_{n-1}$.
\begin{remark}
If $P^{\bgam}_{n-1}$ is invertible, it satisfies the Stein identity
\begin{equation}
(P^{\bgam}_{n-1})^{-1}-T^*(P^{\bgam}_{n-1})^{-1}T=XX^*-YY^*,
\label{4.11}  
\end{equation}
where the columns $X=\sbm{x_1 \\ \vdots \\ x_{n-1}}$ and
$Y=\sbm{y_1 \\ \vdots \\ y_{n-1}}$ are given by
\begin{equation} 
\begin{array}{ll}
X&=(I-t_nT^*)(P^{\bgam}_{n-1})^{-1}(t_n I-T)^{-1}E,\vspace{2mm}\\
Y&=(I-t_nT^*)(P^{\bgam}_{n-1})^{-1}(t_n I-T)^{-1}M.\end{array}
\label{4.12}  
\end{equation}
Furthermore, the entries
\begin{equation}
\begin{array}{ll}
x_i&=(1-t_n\overline{t}_i)\, {\bf e}_i^*(P^{\bgam}_{n-1})^{-1}(t_n I-T)^{-1}E,\vspace{2mm}\\
y_i&=(1-t_n\overline{t}_i)\, {\bf e}_i^*(P^{\bgam}_{n-1})^{-1}(t_n I-T)^{-1}M,
\end{array}\qquad (i=1,\ldots,n-1)
\label{4.9}
\end{equation}
in the columns \eqref{4.12} are subject to equalities
\begin{equation}
|x_i|=|y_i|\neq 0 \quad\mbox{for} \quad i=1,\ldots,n-1.
\label{4.10}
\end{equation}
\label{R:4.1}
\end{remark}
\begin{proof} Making use of \eqref{4.12} and \eqref{4.3}, we verify \eqref{4.11} as follows:
\begin{align*}
XX^*-YY^*&=
(I-t_nT^*)(P^{\bgam}_{n-1})^{-1}(t_n I-T)^{-1}\left[P^{\bgam}_{n-1}-TP^{\bgam}_{n-1}T^*\right]\\
&\qquad\times(\overline{t}_n I-T^*)^{-1}
(P^{\bgam}_{n-1})^{-1}(I-\overline{t}_nT)\\
&=(I-t_nT^*)(P^{\bgam}_{n-1})^{-1}\left[
P^{\bgam}_{n-1}(I-t_nT^*)^{-1}+\right.\\
&\qquad\left. +(I-\overline{t}_nT)^{-1}\overline{t}_nTP^{\bgam}_{n-1}\right]\times
(P^{\bgam}_{n-1})^{-1}(I-\overline{t}_nT)\\
&=(P^{\bgam}_{n-1})^{-1}(I-\overline{t}_nT)+(I-t_nT^*)(P^{\bgam}_{n-1})^{-1}\overline{t}_nT\\
&=(P^{\bgam}_{n-1})^{-1}-T^*(P^{\bgam}_{n-1})^{-1}T.
\end{align*}
Comparing the corresponding diagonal entries on both
sides of \eqref{4.11} gives $0=|x_i|^2-|y_i|^2$ so that $|x_i|=|y_i|$ for $i=1,\ldots,n-1$.
To complete the proof of \eqref{4.10}, it suffices to show that $x_i$ and $y_i$ cannot be
both equal zero. To this end, we compare the $i$-th rows of both sides in \eqref{4.11} to get
\begin{equation}
x_iX^*-y_iY^*={\bf e}_i^* \, (P^{\bgam}_{n-1})^{-1}-{\bf e}_i^* \, T^*(P^{\bgam}_{n-1})^{-1}T=
{\bf e}_i^* \, (P^{\bgam}_{n-1})^{-1}(I-\overline{t}_iT).
\label{4.13} 
\end{equation}
Let us assume that $x_i=y_i=0$. Then the expression on the right side of \eqref{4.13} is
the zero row-vector. Since $(I-\overline{t}_iT)$ is the diagonal matrix with
the $i$-th diagonal entry equal zero and all other diagonal entries being non-zero, it follows
that all entries in the row-vector ${\bf e}_i^* \, (P^{\bgam}_{n-1})^{-1}$, except the $i$-th entry, are 
zeroes, so that
${\bf e}_i^* \, (P^{\bgam}_{n-1})^{-1}=\alpha {\bf e}_i^*$ for some $\alpha\in\C$. Then it follows from
\eqref{4.9} and \eqref{4.1} that
$$
x_i=\alpha (1-t_n\overline{t}_i)\, {\bf e}_i^*(t_n I-T)^{-1}E=-\alpha\overline{t}_i.
$$
Since $x_i=0$ and $t_i\neq 0$, it follows that $\alpha=0$ and hence, ${\bf e}_i^* \,
(P^{\bgam}_{n-1})^{-1}=\alpha {\bf e}_i^*=0$. The latter cannot happen since the matrix $P^{\bgam}_{n-1}$
is invertible. The obtained contradiction completes the proof of \eqref{4.10}.
\end{proof}
For each admissible tuple $\bgam=\{\gamma_1,\ldots,\gamma_{n-1}\}$, we define the $2\times 2$ matrix 
function
\begin{align}
\Theta^{\bgam}(z)&=\begin{bmatrix}\theta^{\bgam}_{11}(z) & \theta^{\bgam}_{12}(z) \\ 
\theta^{\bgam}_{21}(z) &
\theta^{\bgam}_{22}(z)\end{bmatrix}\label{4.4}\\
&=I+(z-t_n)\begin{bmatrix}E^* \\
M^*\end{bmatrix}(I-zT^*)^{-1}(P^{\bgam}_{n-1})^{-1}(t_n I-T)^{-1}
\begin{bmatrix}E & -M\end{bmatrix},\notag
\end{align}
where $T, \, M, \, E$ are defined as in \eqref{4.1}. 
Upon making use of the columns \eqref{4.12} and of the diagonal structure of $T$, we
may write the formula \eqref{4.4} for $\Theta^{\bgam}$ as 
\begin{align}
\Theta^{\bgam}(z)&=I+(z-t_n)\begin{bmatrix}E^* \\
M^*\end{bmatrix}(I-zT^*)^{-1}(I-t_nT^*)^{-1}\begin{bmatrix}X & -Y\end{bmatrix}\label{4.14}\\
&=I+\sum_{i=1}^{n-1}\frac{z-t_n}{(1-z\overline{t}_i)(1-t_n\overline{t}_i)}\cdot
\begin{bmatrix}1 \\ \overline{\f}_i\end{bmatrix}\begin{bmatrix}x_i & -y_i\end{bmatrix}\notag\\
&=I+\sum_{i=1}^{n-1}\left(\frac{t_i}{1-z\overline{t}_i}-
\frac{t_i}{1-t_n\overline{t}_i}\right)
\cdot\begin{bmatrix}1 \\ \overline{\f}_i\end{bmatrix}\begin{bmatrix}x_i & -y_i\end{bmatrix}.
\notag
\end{align}
It is seen from \eqref{4.14} that $\Theta^{\bgam}$ is rational with simple 
poles at $t_1,\ldots,t_{n-1}$. 
We next summarize some other properties of 
$\Theta^{\bgam}$ needed for our subsequent analysis. 
\begin{theorem}
Let $P^{\bgam}_{n-1}>0$, let $X$, $Y$, $\Theta^{\bgam}$ be defined as in  \eqref{4.12}, \eqref{4.4},
and let 
\begin{equation}
\Upsilon(z)=\prod_{i=1}^{n-1}(1-z\overline{t}_i)\quad \mbox{and}\quad J=\left[\begin{array}{cr}1 & 0 \\ 0 &
-1\end{array}\right].
\label{4.5}
\end{equation}
$(1)\;$ For every unimodular constant $\f_n$, the functions
\begin{equation}
p(z)=\Upsilon(z)\left(\theta_{11}^{\bgam}(z)\f_n+\theta_{12}^{\bgam}(z)\right),\quad
q(z)=\Upsilon(z)\left(\theta_{21}^{\bgam}(z)\f_n+\theta_{22}^{\bgam}(z)\right)
\label{4.5b}
\end{equation}
are polynomials of degree $n-1$ with all zeros in $\overline{\D}$ and
in $\C\backslash\D$, respectively.

\smallskip
\noindent
$(2)\;$ The following identities hold for any $z,\zeta\in\C$ ($z\overline{\zeta}\neq 1$):
\begin{align}
\frac{J-\Theta^{\bgam}(z)J\Theta^{\bgam}(\zeta)^*}{1-z\overline{\zeta}}&=\begin{bmatrix}E^* \\
M^*\end{bmatrix}(I-zT^*)^{-1}(P^{\bgam}_{n-1})^{-1}(I-\overline{\zeta}T)^{-1}\begin{bmatrix}E
& M\end{bmatrix},\label{4.13a}\\
\frac{J-\Theta^{\bgam}(\zeta)^*J\Theta^{\bgam}(z)}{1-z\overline{\zeta}}&=\left[\begin{array}{r}X^* \\
-Y^*\end{array}\right](I-\overline{\zeta}T)^{-1}P^{\bgam}_{n-1}(I-zT^*)^{-1}\begin{bmatrix}X
& -Y\end{bmatrix}.
\label{4.16}
\end{align}
$(3)\;$  $\det \, \Theta^{\bgam}(z)=1\;$ for all $\; z\in\C\backslash\{t_1,\ldots,t_n\}$.
\label{T:4.2}
\end{theorem}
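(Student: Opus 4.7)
The plan is to prove the three parts in the order (2), (3), (1), since (2) supplies the $J$-inner structure used in (1), while (3) admits a short independent derivation via Sylvester's determinant identity.

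For part (2), I would verify both identities by direct algebraic expansion. Writing $\mathcal{C}=\sbm{E^{*} \\ M^{*}}$ and $\mathcal{B}=[E \; -M]$, one checks the bookkeeping identities $J\mathcal{C}=\mathcal{B}^{*}$, $\mathcal{B}J=\mathcal{C}^{*}$, and
\[
\mathcal{B}J\mathcal{B}^{*}=\mathcal{C}^{*}J\mathcal{C}=EE^{*}-MM^{*}=P_{n-1}^{\bgam}-TP_{n-1}^{\bgam}T^{*},
\]
the last equality being the Stein identity \eqref{4.3}. Substituting the realization \eqref{4.4} into $J-\Theta^{\bgam}(z)J\Theta^{\bgam}(\zeta)^{*}$ and collecting the four terms, these relations together with the resolvent identity for $(I-zT^{*})^{-1}$ and $(I-\bar\zeta T)^{-1}$ let the $(t_nI-T)^{-1}$ and $(\bar t_n I-T^{*})^{-1}$ factors telescope and produce the common factor $1-z\bar\zeta$, leaving precisely the right-hand side of \eqref{4.13a}. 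Identity \eqref{4.16} is handled in the parallel way, starting from the second realization \eqref{4.14} and using the dual Stein identity \eqref{4.11} together with $[X\;-Y]J\sbm{X^{*} \\ -Y^{*}}=XX^{*}-YY^{*}$.

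For part (3), I would apply Sylvester's identity $\det(I_{2}+UV)=\det(I_{n-1}+VU)$ to the factorization $\Theta^{\bgam}(z)=I+\mathcal{C}\mathcal{D}(z)\mathcal{B}$ with $\mathcal{D}(z)=(z-t_n)(I-zT^{*})^{-1}(P_{n-1}^{\bgam})^{-1}(t_nI-T)^{-1}$. Using $\mathcal{B}\mathcal{C}=P_{n-1}^{\bgam}-TP_{n-1}^{\bgam}T^{*}$ from \eqref{4.3}, and the fact that the diagonal matrices $T$, $(I-zT^{*})$ and $(t_nI-T)$ all commute with $T^{*}T=I$, a short manipulation rewrites $I+\mathcal{D}(z)\mathcal{B}\mathcal{C}$ as
\[
(I-zT^{*})^{-1}(P_{n-1}^{\bgam})^{-1}(t_nI-T)^{-1}(zI-T)P_{n-1}^{\bgam}(I-t_nT^{*}).
\]
The factors of $P_{n-1}^{\bgam}$ cancel under the determinant, which reduces to $\prod_{i=1}^{n-1}\frac{(z-t_i)(1-t_n\bar t_i)}{(1-z\bar t_i)(t_n-t_i)}$. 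Since $|t_i|=1$ gives $(1-t_n\bar t_i)/(t_n-t_i)=-\bar t_i$ and $(z-t_i)/(1-z\bar t_i)=-t_i$, each factor equals $|t_i|^{2}=1$, so $\det\Theta^{\bgam}\equiv 1$.

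For part (1), the expansion \eqref{4.14} makes $\Upsilon(z)\Theta^{\bgam}(z)$ a polynomial matrix of degree at most $n-1$, so $p$ and $q$ are polynomials of degree at most $n-1$. Specializing \eqref{4.13a} to $\zeta=z$ and using $(P_{n-1}^{\bgam})^{-1}>0$ yields $\Theta^{\bgam}(z)J\Theta^{\bgam}(z)^{*}\leq J$ on $\D$ and $\geq J$ on $\C\setminus\overline{\D}$; reading off the diagonal entries gives $|\theta_{22}^{\bgam}(z)|^{2}-|\theta_{21}^{\bgam}(z)|^{2}\geq 1$ in $\D$ and $|\theta_{11}^{\bgam}(z)|^{2}-|\theta_{12}^{\bgam}(z)|^{2}\geq 1$ in $\C\setminus\overline{\D}$. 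A zero $z_0\in\D$ of $\theta_{21}^{\bgam}\f_n+\theta_{22}^{\bgam}$ would force $|\theta_{22}^{\bgam}(z_0)|=|\theta_{21}^{\bgam}(z_0)|$, violating the first inequality; the argument ruling out zeros of $\theta_{11}^{\bgam}\f_n+\theta_{12}^{\bgam}$ in $\C\setminus\overline{\D}$ is symmetric. Since the zeros of $\Upsilon$ lie on $\T\subset\overline{\D}\cap(\C\setminus\D)$, both zero-location statements follow, and the precise degree $n-1$ is recovered by letting $z\to\infty$ in the same inequalities and combining with $\det\Theta^{\bgam}=1$ to see that $\Theta^{\bgam}(\infty)\sbm{\f_n \\ 1}$ has nonzero components. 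The main technical obstacle is the bookkeeping in the identity verifications of part (2); everything else reduces to diagonal-matrix algebra and the positivity of $P_{n-1}^{\bgam}$.
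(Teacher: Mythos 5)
Your overall strategy is sound and is essentially the one the paper has in mind: the paper does not prove Theorem \ref{T:4.2} itself but defers to \cite{bgr}, remarking only that \eqref{4.13a} and \eqref{4.16} follow by direct computation from the Stein identities \eqref{4.3} and \eqref{4.11}, and that $\det\Theta^{\bgam}$ reduces to $\det\left[(zI-T)(\overline{t}_nI-T^*)(I-zT^*)^{-1}(I-\overline{t}_nT)^{-1}\right]=1$. Your verifications of (2) and (3) fill in exactly these computations (your factor-by-factor evaluation $(z-t_i)(1-t_n\overline{t}_i)/\bigl((1-z\overline{t}_i)(t_n-t_i)\bigr)=(-t_i)(-\overline{t}_i)=1$ is correct, and the intermediate identity for $I+\mathcal{D}(z)\mathcal{B}\mathcal{C}$ checks out), and your derivation of the zero-location statements in (1) from the $J$-contractivity of $\Theta^{\bgam}$ on $\D$ and $J$-expansivity on $\C\setminus\overline{\D}$ is the standard specialization of the inertia result quoted from \cite{bgr} to the case $P^{\bgam}_{n-1}>0$.

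There is, however, one genuine flaw in the endgame of part (1): the claim that $\Theta^{\bgam}(\infty)\sbm{\f_n\\1}$ has \emph{both} components nonzero. The inequality $|\theta^{\bgam}_{11}|^2-|\theta^{\bgam}_{12}|^2\ge1$ on $\C\setminus\overline{\D}$ does pass to the limit and gives $N(\infty)\neq0$, hence $\deg p=n-1$ exactly, which is the fact the paper actually uses later. But $\det\Theta^{\bgam}(\infty)=1$ only tells you that the \emph{vector} $\Theta^{\bgam}(\infty)\sbm{\f_n\\1}$ is nonzero, not that its second component is, and the only inequality available for the second row on $\C\setminus\overline{\D}$ is $|\theta^{\bgam}_{21}|^2-|\theta^{\bgam}_{22}|^2\ge-1$, which is vacuous. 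In fact $D(\infty)=0$ can occur: take $n=2$, $t_1=1$, $t_2=i$, $\f_1=1$, $\f_2=i$, $\gamma_1=1$ (an admissible tuple); a direct computation gives $D(z)=(1-i)/(1-z)$, so $q(z)=\Upsilon(z)D(z)=1-i$ is a nonzero constant of degree $0$, not $n-1=1$ (here $f_{\bgam}(z)=z$). So the assertion that $q$ has degree $n-1$ must be read in the formal sense of \cite{bgr}, with zeros at infinity counted among the zeros in $\C\setminus\D$; what your argument correctly proves, and what is actually needed downstream, is that $q$ has no zeros in $\D$ while $p$ has degree exactly $n-1$ with all zeros in $\overline{\D}$. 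You should either weaken the stated conclusion for $q$ accordingly or delete the unsupported claim about the second component of $\Theta^{\bgam}(\infty)\sbm{\f_n\\1}$.
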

The proofs of (1)--(3) can be found in \cite{bgr} for a more general framework where 
$P^{\bgam}_{n-1}$ is an invertible Hermitian matrix satisfying the Stein identity \eqref{4.3}
for some $T$, $E$ and $M$ (though, of the same dimensions as in \eqref{4.1}) and $t_n$
is an arbitrary point in $\T\backslash\sigma(T)$. In this more general setting, 
$p$ has $\pi(P^{\bgam}_{n-1})$ zeroes in $\overline{\D}$ and $q$ has $\nu(P^{\bgam}_{n-1})$
zeroes in $\C\backslash\D$ where $\pi(P^{\bgam}_{n-1})$ and $\nu(P^{\bgam}_{n-1})$ are 
respectively the number of positive and the number of negative eigenvalues of 
$\nu(P^{\bgam}_{n-1})$ counted with multiplicities. Straightforward verifications of 
\eqref{4.13a} and \eqref{4.16} rely solely on the Stein identities \eqref{4.3} and \eqref{4.11},
respectively. Another calculation based on \eqref{4.3} shows that
$$
\det \, \Theta^{\bgam}(z)=\det \left[(zI-T)(\overline{t}_n
I-T^*)(I-zT^*)^{-1}(1-\overline{t}_n T)^{-1}\right]
$$
which is equal to one due to a special form \eqref{4.1} of $T$. 
\begin{remark} Let $P\in\C^{n\times n}$ be a positive semidefinite saturated matrix with 
${\rm rank} \, P=k<n$. Let $G\in\C^{n\times n}$ be a diagonal positive semidefinite
matrix with ${\rm rank} \, G=m<n-k$. Then ${\rm rank}(P+G)=k+m$. 
\label{R:4.5}
\end{remark}
Indeed, since the matrix $\Phi P\Phi^{-1}$ is saturated for any permutation matrix 
$\Phi$, we may take $P$ and $G$ conformally decomposed as follows:
$$
P=\begin{bmatrix} P_{11} &  P_{12} \\ P_{21} & P_{22}\end{bmatrix},\quad D=\begin{bmatrix}
0 & 0 \\ 0 &\widetilde{G}\end{bmatrix},\quad P_{11}\in\C^{k\times k}.
$$
Since ${\rm rank} \, P={\rm rank} \, P_{11}=k$ (i.e., $P_{11}$ is invertible), it follows 
the Schur complement of $P_{11}$ in $P$ is equal to the zero matrix:
$P_{22}-P_{21}P_{11}^{-1}P_{12}=0$. By the formula for the rank of a block matrix, we then have 
\begin{align*}
{\rm rank}(P+G)={\rm rank} \, \sbm{P_{11} &  P_{12} \\ P_{21} & P_{22}+\widetilde{G}}
&={\rm rank} \, P_{11}+{\rm rank}(P_{22}+\widetilde{G}-P_{21}P_{11}^{-1}P_{12})\\
&=k+{\rm rank} \, \widetilde{G}=k+{\rm rank} \, G=k+m.
\end{align*}
The next theorem is the main result of this section.
\begin{theorem}
Given data $(t_i, \, \f_i)\in\T^2$ ($i=1,\ldots,n$), let $\bgam=\{\gamma_1,\ldots,\gamma_{n-1}\}$ 
be an admissible tuple and let
\begin{equation}
f_{\bgam}(z)=\frac{\theta^{\bgam}_{11}(z)\f_n+\theta^{\bgam}_{12}(z)}{\theta^{\bgam}_{21}(z)\f_n+\theta^{\bgam}_{22}(z)}
\label{4.17}
\end{equation}
where the coefficients $\theta^{\bgam}_{ij}$ in \eqref{4.17} are constructed from $\bgam$ by formula \eqref{4.9}.
Let $x_i$ and $y_i$ be the numbers defined as in \eqref{4.9}. Then
\begin{enumerate}
\item $f_{\bgam}$ is a finite Blaschke product and satisfies conditions \eqref{1.2}.
\item $f_{\bgam}$ satisfies conditions $\; |f_{\bgam}^\prime(t_i)|\le \gamma_i \; $
for $\; i=1,\ldots,n-1$. Moreover, $\; |f_{\bgam}^\prime(t_i)|=\gamma_i\; $ if and only if $\; 
x_i\f_n\neq y_i$.
\item $\deg \, f_{\bgam}=n-1-\ell, \; $ where $\; \ell=\#\{i\in\{1,\ldots,n-1\}: \; x_i\f_n=y_i\}$.
\end{enumerate}
Conversely every Blaschke product $f\in\cB_{n-1}$ subject to interpolation conditions \eqref{1.2}
admits a representation \eqref{4.17} for some admissible tuple ${\bgam}=\{\gamma_1,\ldots,\gamma_{n-1}\}$.
This representation is unique if and only if $\deg \, f=n-1$.
\label{T:4.5}
\end{theorem}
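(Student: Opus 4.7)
The plan is to establish the forward implications (1)--(3) via a residue analysis of the coefficients $\theta^{\bgam}_{ij}$ at the interpolation nodes, and then to derive the converse by extracting $\bgam$ directly from the Schwarz--Pick matrix of a given interpolant. I would begin from the polynomial representation of Theorem \ref{T:4.2}(1): the polynomials $p(z)=\Upsilon(z)(\theta^{\bgam}_{11}(z)\f_n+\theta^{\bgam}_{12}(z))$ and $q(z)=\Upsilon(z)(\theta^{\bgam}_{21}(z)\f_n+\theta^{\bgam}_{22}(z))$ have degree $n-1$ with zeros in $\overline{\D}$ and in $\C\setminus\D$ respectively. Specializing identity \eqref{4.13a} to $\zeta=z$ and exploiting that the right-hand side is Hermitian yields $\Theta^{\bgam}(z)J\Theta^{\bgam}(z)^{*}=J$ for $z\in\T\setminus\{t_1,\ldots,t_{n-1}\}$; hence $|p|=|q|$ on $\T$ and $f_{\bgam}=p/q$ is a finite Blaschke product whose degree equals $n-1$ minus the number of common zeros of $p$ and $q$. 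Because of the zero locations of $p$ and $q$, these common zeros can only lie at the points $t_i$ where $\Upsilon$ vanishes.

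Using the expansion \eqref{4.14}, the singular parts of $\theta^{\bgam}_{11}\f_n+\theta^{\bgam}_{12}$ and $\theta^{\bgam}_{21}\f_n+\theta^{\bgam}_{22}$ at $t_i$ are each proportional to $x_i\f_n-y_i$. When $x_i\f_n\neq y_i$, the factor $(1-z\overline{t}_i)$ of $\Upsilon$ cancels the pole but leaves $p(t_i),q(t_i)\neq 0$, and a short computation of the limit gives $p(t_i)/q(t_i)=\f_i$. When $x_i\f_n=y_i$ the singular term vanishes, so the products $\Upsilon\theta^{\bgam}_{ij}$ acquire a common factor $(z-t_i)$, and an L'H\^opital-type argument on the next-order terms again yields $\f_i$. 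Together with $\Theta^{\bgam}(t_n)=I$ (immediate from the $(z-t_n)$ factor in \eqref{4.4}), this proves (1) and, by counting the cancellations, (3). For (2), I would compute the boundary Schwarz--Pick matrix $P^{f_{\bgam}}(\{t_1,\ldots,t_{n-1}\})$ directly from \eqref{4.13a}: its non-degenerate principal submatrix coincides with the corresponding submatrix of $P^{\bgam}_{n-1}$, so $|f'_{\bgam}(t_i)|=\gamma_i$ exactly when $x_i\f_n\neq y_i$; at degenerate indices the common factor $(z-t_i)$ raises the vanishing order of $f_{\bgam}-\f_i$ at $t_i$ and forces the strict inequality.

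For the converse, let $f\in\cB_{n-1}$ satisfy \eqref{1.2} and set $k=\deg f$. By Lemma \ref{L:2.1}, the boundary Schwarz--Pick matrix $P^f(\{t_1,\ldots,t_{n-1}\})$ is positive semidefinite of rank $\min\{n-1,k\}$ and, by Remark \ref{R:2.4}, saturated whenever $k<n-1$. If $k=n-1$, setting $\gamma_i=|f'(t_i)|$ makes $P^{\bgam}_{n-1}=P^f(\{t_1,\ldots,t_{n-1}\})$ positive definite, so $\bgam$ is admissible and forced; the determinacy of {\bf MP} on the full $n$-tuple (Theorem \ref{T:2.4}) then identifies $f_{\bgam}=f$. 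If $k<n-1$, Remark \ref{R:4.5} lets me inflate the diagonal at $n-1-k$ suitably chosen indices to produce an admissible $\bgam$ with $P^{\bgam}_{n-1}>0$; by part (3) the resulting $f_{\bgam}$ has $\ell=n-1-k$ cancellations and hence degree $k$, and Theorem \ref{T:2.4} again forces $f_{\bgam}=f$. The degree count $\ell=n-1-\deg f$, combined with the freedom in the choice of where to inflate, delivers non-uniqueness exactly when $\deg f<n-1$.

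The main technical obstacle will be the bookkeeping at the degenerate nodes: verifying that $f_{\bgam}(t_i)=\f_i$ survives the cancellation of $(z-t_i)$ between $p$ and $q$, and simultaneously proving the strict inequality $|f'_{\bgam}(t_i)|<\gamma_i$ in that case. Both rely on the next-order terms of the expansion \eqref{4.14} and on the identities $|x_i|=|y_i|\neq 0$ from Remark \ref{R:4.1}, which guarantee that the equation $x_i\f_n=y_i$ picks out a single unimodular $\f_n$ and that the L'H\^opital step does not itself degenerate.
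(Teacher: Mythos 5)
Your forward direction follows the paper's architecture in outline (the $J$-identities of Theorem \ref{T:4.2}, residue analysis at the nodes via \eqref{4.14}, degree count through common zeros of $p$ and $q$), and your route to innerness --- $J$-unitarity of $\Theta^{\bgam}$ on $\T$ from \eqref{4.13a} combined with the zero locations of $p$ and $q$ --- is a legitimate variant of the paper's argument, which instead uses \eqref{4.16} to get the positivity identity $1-|f_{\bgam}(z)|^2=(1-|z|^2)\Psi(z)^*P^{\bgam}_{n-1}\Psi(z)/|D(z)|^2$ on $\D$. But the degenerate case $x_i\f_n=y_i$ is exactly where the real work lies, and you have deferred it. The paper's Case 2 is not a generic L'H\^opital step: it uses the row identity \eqref{4.13} (equivalently the Stein identity \eqref{4.11} for $(P^{\bgam}_{n-1})^{-1}$) to compute $N(t_i)=-t_i\widetilde p_{ii}/\overline{y}_i$ and $D(t_i)=-t_i\widetilde p_{ii}\overline{\f}_i/\overline{y}_i$ explicitly, and then $|f_{\bgam}^\prime(t_i)|=\gamma_i-1/\widetilde p_{ii}<\gamma_i$. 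Your proposed mechanism for the strict inequality --- that the common factor $(z-t_i)$ ``raises the vanishing order of $f_{\bgam}-\f_i$'' --- cannot work: a nonconstant finite Blaschke product has $|f^\prime(t)|=tf^\prime(t)\overline{f(t)}>0$ at every $t\in\T$, so $f_{\bgam}-\f_i$ always vanishes to order exactly one at $t_i$. The strict inequality is a quantitative output of the Case 2 computation, not a consequence of increased vanishing.

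The converse is where the gap is most serious. You want to identify $f_{\bgam}$ with $f$ by appealing to the determinacy of the full $n$-point problem {\bf MP} (Theorem \ref{T:2.4}), but that requires both functions to solve the \emph{same} determinate problem, i.e. you must verify $|f_{\bgam}^\prime(t_n)|\le|f^\prime(t_n)|$ --- a condition at $t_n$ that parts (1)--(3) say nothing about. Worse, the natural estimate runs the wrong way: if $Q_{11}=P^{f_{\bgam}}(t_1,\ldots,t_{n-1})\le P^{\bgam}_{n-1}$, then $F Q_{11}^{-1}F^*\ge F(P^{\bgam}_{n-1})^{-1}F^*$, so the Schur-complement formula for the $(n,n)$ entry gives $|f_{\bgam}^\prime(t_n)|\ge|f^\prime(t_n)|$ a priori, with equality only after you already know $f_{\bgam}=f$. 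Likewise, your claim that in the case $k<n-1$ the constructed $f_{\bgam}$ has exactly $\ell=n-1-k$ cancellations presupposes the identification $f_{\bgam}=f$ and is circular: $\ell$ is defined by the vanishing of the $x_i\f_n-y_i$, which you have not tied to the inflated indices. The paper avoids all of this by arguing directly: it borders $P^{\bgam}_{n-1}$ with an interior point $z\in\D$ to form $\mathbb P^{\bgam}(z)$, uses Remark \ref{R:4.5} to show ${\rm rank}\,\mathbb P^{\bgam}(z)=n-1$, deduces that the Schur complement \eqref{4.41} vanishes identically on $\D$, converts this via \eqref{4.13a} into \eqref{4.38}, and concludes that the parameter $\cE=(f\theta^{\bgam}_{22}-\theta^{\bgam}_{12})/(\theta^{\bgam}_{11}-f\theta^{\bgam}_{21})$ is unimodular on $\D$, hence a unimodular constant, equal to $\f_n$ by evaluation at $t_n$. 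You would need to either carry out that argument or supply an independent proof that $|f_{\bgam}^\prime(t_n)|\le|f^\prime(t_n)|$; as written, the converse does not close.
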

\begin{proof} Let $\bgam=\{\gamma_1,\ldots,\gamma_{n-1}\}$ be an admissible tuple and let 
$\Theta^{\bgam}$ be 
defined as in \eqref{4.4}. Since $\Theta^{\bgam}$ is rational, the 
function $f_{\bgam}$ is rational as well. Let
\begin{equation}
N(z)=\theta^{\bgam}_{11}(z)\f_n+\theta^{\bgam}_{12}(z)\quad\mbox{and}\quad
D(z)=\theta^{\bgam}_{21}(z)\f_n+\theta^{\bgam}_{22}(z)
\label{4.18}  
\end{equation}
denote the numerator and the denominator in \eqref{4.17} and let
\begin{equation}
\Psi(z)=(I-zT^*)^{-1}(X\f_n-Y)=\sum_{i=1}^{n-1}{\bf e}_i \frac{x_i\f_n-y_i}{1-z\overline{t}_i}.
\label{4.22}  
\end{equation}
Combining \eqref{4.18} and \eqref{4.14} gives
 \begin{equation}
\begin{bmatrix}N(z) \\ D(z)\end{bmatrix}=\Theta^{\bgam}(z)\begin{bmatrix}\f_n \\ 1\end{bmatrix}
=\begin{bmatrix}\f_n \\ 1\end{bmatrix}+(z-t_n)\begin{bmatrix}E^* \\ 
M^*\end{bmatrix}(I-t_nT^*)^{-1}\Psi(z).
\label{4.19}
\end{equation}
Taking the advantage of the matrix $J$ in \eqref{4.5} and of relation \eqref{4.19} we get
\begin{align}  
|D(z)|^2-|N(z)|^2&=-\begin{bmatrix}N(z)^* & D(z)^*\end{bmatrix}J\begin{bmatrix}N(z) \\ 
D(z)\end{bmatrix}\notag\\
&=-\begin{bmatrix}\overline{\f}_n & 1\end{bmatrix}\Theta^{\bgam}(z)^*J\Theta^{\bgam}(z)
\begin{bmatrix}\f_n \\ 1\end{bmatrix}\notag\\
&=\begin{bmatrix}\overline{\f}_n & 1\end{bmatrix}\left\{J-\Theta^{\bgam}(z)^*J\Theta^{\bgam}(z)\right\}
\begin{bmatrix}\f_n \\ 1\end{bmatrix},
\label{4.21}   
\end{align}
where for the third equality we used
$$
\begin{bmatrix}\overline{\f}_n & 1\end{bmatrix}J\begin{bmatrix}\f_n \\ 1\end{bmatrix}=1-|\f_n|^2=0.
$$
Substituting \eqref{4.16} into \eqref{4.21} and making use of notation \eqref{4.22}, we conclude
$$
|D(z)|^2-|N(z)|^2=(1-|z|^2)\Psi(z)^*P^{\bgam}_{n-1}\Psi(z).
$$
Combining the latter equality with \eqref{4.17} and \eqref{4.18} gives
\begin{equation}
\frac{1-|f_{\bgam}(z)|^2}{1-|z|^2}=\frac{|D(z)|^2-|N(z)|^2}{(1-|z|^2)|D(z)|^2}
=\frac{\Psi(z)^*P^{\bgam}_{n-1}\Psi(z)}{|D(z)|^2},
\label{4.23}
\end{equation} 
which implies, in particular, that $f_{\bgam}$ is inner. Since  $f_{\bgam}$ is rational, it
extends by continuity to a finite Blaschke product. One can see from \eqref{4.22} that 
\begin{equation}
\lim_{z\to t_i}(z-t_i)\cdot \Psi(z)=-{\bf e}_it_i(x_i\f_n-y_i)
\quad \mbox{for}\quad i=1,\ldots,n-1,
\label{4.14b} 
\end{equation}  
which together with \eqref{4.19} and \eqref{4.1} implies
\begin{align}
\lim_{z\to t_i}(z-t_i)\cdot \begin{bmatrix}N(z) \\ D(z)\end{bmatrix}&=
(t_n-t_i)\begin{bmatrix}E^* \\ M^*\end{bmatrix}(I-t_nT^*)^{-1}
{\bf e}_it_i(x_i\f_n-y_i)\notag\\
&=-\begin{bmatrix}1 \\ \overline{\f}_i\end{bmatrix}t_i^2(x_i\f_n-y_i)
\quad \mbox{for}\quad i=1,\ldots,n-1.
\label{4.14c}  
\end{align} 
To show that $f_{\bgam}$ satisfies conditions \eqref{1.2}, we first observe that $\Theta^{\bgam}(t_n)=I$ 
(by definition \eqref{4.4}); now the equality $f_{\bgam}(t_n)=\f_n$ is immediate from 
\eqref{4.17}.
Verification of other equalities in  \eqref{1.2} depends on whether or not $x_i\f_n=y_i$.

\smallskip
\noindent
{\bf Case 1:} Let us assume that $x_i\f_n\neq y_i$. Then we have from \eqref{4.17} and \eqref{4.14c},
\begin{equation}
f_{\bgam}(t_i)=\frac{(z-t_i)N(z)}{(z-t_i)D(z)}=\frac{t_i^2(x_i\f_n-y_i)}{\overline{\f}_i 
t_i^2(x_i\f_n-y_i)}=\frac{1}{\overline{\f}_i}=\f_i.
\label{4.14d}
\end{equation}
Under the same assumption, we conclude from \eqref{2.2}, 
\eqref{4.2} and \eqref{4.23}--\eqref{4.14c}, 
\begin{align*}
|f_{\bgam}^\prime(t_i)|=\lim_{z\to t_i}\frac{1-|f_{\bgam}(z)|^2}{1-|z|^2}&=
\lim_{z\to t_i}\frac{|z-t_i|^2\Psi(z)^*P^{\bgam}_{n-1}\Psi(z)}{|z-t_i|^2|D(z)|^2}\notag\\
&=\frac{|x_i\f_n-y_i|^2{\bf e}_i^*P^{\bgam}_{n-1}{\bf e}_i}{|x_i\f_n-y_i|^2}=\gamma_i.
\end{align*}
{\bf Case 2:} Let us assume that $x_i\f_n= y_i$. Then the functions $N$, $D$ and $\Psi$ are analytic at 
$t_i$. To compute the values of these functions at $z=t_i$, we first take the adjoints of both sides 
in \eqref{4.13}:
$$
X\overline{x}_i-Y\overline{y}_i=(I-t_iT^*)(P^{\bgam}_{n-1})^{-1}{\bf e}_i.
$$
We next divide both sides by $\overline{y}_i$ and make use of equalities
$\f_n=y_i/x_i=\overline{x}_i/\overline{y}_i$ (by the assumption of Case 2)  
to get
$$
X\f_n-Y=\frac{1}{\overline{y}_i}(I-t_iT^*)(P^{\bgam}_{n-1})^{-1}{\bf e}_i.
$$
Substituting the latter equality into \eqref{4.22} results in
$$
\Psi(z)=\frac{1}{\overline{y}_i}(I-zT^*)^{-1}(I-t_iT^*)(P^{\bgam}_{n-1})^{-1}{\bf e}_i,  
$$
which being evaluated at $t_i$, gives
\begin{equation}
\Psi(t_i)=\frac{1}{\overline{y}_i}(I-{\bf e}_i{\bf e}_i^*)(P^{\bgam}_{n-1})^{-1}{\bf e}_i=
\frac{1}{\overline{y}_i}\left((P^{\bgam}_{n-1})^{-1}{\bf e}_i -{\bf e}_i \widetilde{p}_{ii}\right),
\label{4.25}
\end{equation}
where $\widetilde{p}_{ii}$ denotes the $i$-th diagonal entry of $(P^{\bgam}_{n-1})^{-1}$.
Evaluating the formula \eqref{4.19} at $z=t_i$ gives, in view of \eqref{4.25},
$$
\begin{bmatrix}N(t_i) \\ D(t_i)\end{bmatrix}
=\begin{bmatrix}\f_n \\ 1\end{bmatrix}+\frac{t_i-t_n}{\overline{y}_i}\begin{bmatrix}E^* \\
M^*\end{bmatrix}(I-t_nT^*)^{-1}\left((P^{\bgam}_{n-1})^{-1}{\bf e}_i -{\bf e}_i 
\widetilde{p}_{ii}\right).
$$
Making use of formulas \eqref{4.12}, we have
\begin{align*}
\frac{t_i-t_n}{\overline{y}_i}\begin{bmatrix}E^* \\ M^*\end{bmatrix}
(I-t_nT^*)^{-1}(P^{\bgam}_{n-1})^{-1}{\bf e}_i&=
\frac{t_i-t_n}{\overline{y}_i}\begin{bmatrix}X^* \\ Y^*\end{bmatrix}
(t_n I-T)^{-1}{\bf e}_i\\
&=-\frac{1}{\overline{y}_i}\begin{bmatrix}X^*{\bf e}_i \\ Y^*{\bf e}_i\end{bmatrix}=
-\frac{1}{\overline{y}_i}\begin{bmatrix}\overline{x}_i \\ \overline{y}_i\end{bmatrix}=
-\begin{bmatrix}\f_n \\ 1\end{bmatrix}.
\end{align*}
Combining the two latter formulas and again making use of \eqref{4.1} leads us to 
\begin{align*}
\begin{bmatrix}N(t_i) \\ D(t_i)\end{bmatrix}&=\frac{t_n-t_i}{\overline{y}_i}\begin{bmatrix}E^* \\ 
M^*\end{bmatrix}(I-t_nT^*)^{-1}{\bf 
e}_i\widetilde{p}_{ii}\\
&=\frac{t_n-t_i}{\overline{y}_i(1-t_n\overline{t}_i)}\begin{bmatrix}E^*{\bf e}_i \\
M^*{\bf e}_i\end{bmatrix}\widetilde{p}_{ii}=-\begin{bmatrix}1 \\ 
\overline{\f}_i\end{bmatrix}\frac{t_i\widetilde{p}_{ii}}{\overline{y}_i}.
\end{align*}
Thus, 
\begin{equation}
N(t_i)=-\frac{t_i \widetilde{p}_{ii}}{\overline{y}_i}\quad\mbox{and}\quad
D(t_i)=-\frac{t_i \widetilde{p}_{ii}\overline{\f}_i}{\overline{y}_i},
\label{4.27}
\end{equation}
and subsequently, $f_{\bgam}(t_i)=\frac{N(t_i)}{D(t_i)}=\frac{1}{\overline{\f}_i}=w_i$. Furthermore,
we have from \eqref{2.2} and \eqref{4.23}, 
\begin{equation}
|f_{\bgam}^\prime(t_i)|=\lim_{z\to t_i}\frac{1-|f_{\bgam}(z)|^2}{1-|z|^2}
=\lim_{z\to t_i}\frac{\Psi(z)^*P^{\bgam}_{n-1}\Psi(z)}{|D(z)|^2}
=\frac{\Psi(t_i)^*P^{\bgam}_{n-1}\Psi(t_i)}{|D(t_i)|^2}.
\label{4.27a}
\end{equation}
In view of \eqref{4.25} and \eqref{4.27}, 
\begin{align*}
\Psi(t_i)^*P^{\bgam}_{n-1}\Psi(t_i)&=\frac{1}{|y_i|^2}\big({\bf e}_i^*(P^{\bgam}_{n-1})^{-1}
-\widetilde{p}_{ii}{\bf e}^*_i\big)P^{\bgam}_{n-1}
\big((P^{\bgam}_{n-1})^{-1}{\bf e}_i -{\bf e}_i\widetilde{p}_{ii}\big)\\
&=\frac{1}{|y_i|^2}\big({\bf e}_i^*(P^{\bgam}_{n-1})^{-1}{\bf e}_i-2\widetilde{p}_{ii}+\widetilde{p}_{ii}^2
{\bf e}^*_iP^{\bgam}_{n-1}{\bf e}_i\big)
=\frac{\gamma_i\widetilde{p}_{ii}^2-\widetilde{p}_{ii}}{|y_i|^2},\\
|D(t_i)|^2|&=\frac{\widetilde{p}_{ii}^2}{|y_i|^2}.
\end{align*}
Substituting the two latter equalities into the right hand side of \eqref{4.27a} we get
$$
|f_{\bgam}^\prime(t_i)|=\frac{\gamma_i\widetilde{p}_{ii}^2-\widetilde{p}_{ii}}{\widetilde{p}_{ii}^2}
=\gamma_i-\frac{1}{\widetilde{p}_{ii}}<\gamma_i.
$$
We have verified equalities \eqref{1.2} and we showed that $|f_{\bgam}^\prime(t_i)|\le \gamma_i$ with 
strict inequality if and only if $x_i\f_n= y_i$ (i.e., in Case 2). This completes the proof of 
statements (1) and (2) of the theorem.

\smallskip

To prove part (3), we multiply the numerator and the denominator on the right hand side of
\eqref{4.17} by $\Upsilon$ (see formula \eqref{4.5}) to get 
a linear fractional representation for $f$ with polynomial coefficients 
$\widetilde\theta^{\bgam}_{ij}=\Upsilon \theta^{\bgam}_{ij}$:
\begin{equation}
f_{\bgam}(z)=\frac{\widetilde\theta^{\bgam}_{11}(z)\f_n+\widetilde\theta^{\bgam}_{12}(z)}
{\widetilde\theta^{\bgam}_{21}(z)\f_n+\widetilde\theta^{\bgam}_{22}(z)}=
\frac{\Upsilon(z)N(z)}{\Upsilon(z)D(z)}=\frac{p(z)}{q(z)},
\label{4.28}
\end{equation}
where $p$ and $q$ are the polynomials given in \eqref{4.5b}. Since the resulting function 
$f_{\bgam}$ extends to a finite Blaschke product (with no poles or zeroes on $\T$), it follows 
that $p$ and $q$ have the same (if any) zeroes on  $\T$ counted with multiplicities.
The common zeroes may occur only at the zeroes of the determinant of the coefficient 
matrix and since 
$\; {\displaystyle\det (\Upsilon(z)\widetilde\Theta^{\bgam}(z))=[\Upsilon(z)]^2}$
(by statement (3) in Theorem \ref{T:4.2} and by
analyticity of $\det \, (\Upsilon(z)\widetilde\Theta^{\bgam}(z))$),
it follows that $p$ and $q$ may have common zeros only at $t_1,\ldots,t_{n-1}$.
By \eqref{4.14c} and \eqref{4.5}, $p(t_i)=0$ if and only if $x_i\f_n=y_i$.
On the other hand, if this is the case, $D(t_i)\neq 0$ (by formula \eqref{4.27})
and therefore, $t_i$ is a {\em simple} zero of $p=\Upsilon D$. 
Thus, $D$ may have only simple zeros at $t_1,\ldots,t_{n-1}$.
We summarize: by statement (4) in Theorem \ref{T:4.2}, the numerator $p$ in \eqref{4.28} has $n-1$ 
zeroes in $\overline{\D}$. All zeroes of $p$ and $q$ on $\T$ are simple and common; they occur 
precisely at those $t_i$'s  for which $x_i\f_n=y_i$. After zero cancellations, the function 
$f_{\bgam}$ turns out to be a finite  Blaschke product of degree $n-1-\#\{i\in\{1,\ldots,n-1\}: \; 
x_i\f_n=y_i\}$. This completes the proof of  part (3).

\medskip

To prove the converse statement, let us assume that $f$ is a Blaschke product of degree $k\le n-1$
that satisfies conditions \eqref{1.2}. For a fixed permutation $\{i_1,\ldots,i_{n-1}\}$ of the index set 
$\{1,\ldots,n-1\}$, we choose the integers $\gamma_1,\ldots,\gamma_{n-1}$ so that
\begin{equation}
\gamma_{_{i_j}}=|f^\prime(t_{_{i_j}})|\quad (1\le j\le k)\quad\mbox{and}\quad
\gamma_{_{i_j}}>|f^\prime(t_{_{i_j}})|\quad (k< j\le n-1).
\label{4.30c}
\end{equation}
Due to this choice, the diagonal matrix
\begin{equation}
G=\begin{bmatrix}\gamma_1-|f^\prime(t_{1})| && 0 \\ &\ddots & \\ 0 && \gamma_{n-1}-|f^\prime(t_{n-1})|
\end{bmatrix}
\label{4.31}  
\end{equation}
is positive semidefinite and ${\rm rank} \, G=n-k-1$. We are going to show that the tuple 
$\bgam=\{\gamma_1,\ldots,\gamma_{n-1}\}$ is admissible and that $f=f_{\bgam}$ as in \eqref{4.17}.

\smallskip

Let $P^f=P^f(t_1,\ldots,t_{n-1},z)$ be the Schwarz-Pick matrix of $f$ based on the interpolation nodes 
$t_1,\ldots,t_{n-1}$ and one additional point $z\in\D$. According to \eqref{2.1} and due to interpolation 
conditions \eqref{1.2}, this matrix has the form
\begin{equation}
P^f=\begin{bmatrix}|f^\prime(t_1)| & \frac{1-\f_1\overline{\f}_2}{1-t_1\overline{t}_2} & \ldots &
\frac{1-\f_1\overline{\f}_{n-1}}{1-t_1\overline{t}_{n-1}} &
\frac{1-\f_1\overline{f(z)}}{1-t_1\overline{z}}\\
\frac{1-\f_2\overline{\f}_1}{1-t_2\overline{t}_1} & |f^\prime(t_2)| & \ldots &
\frac{1-\f_2\overline{\f}_{n-1}}{1-t_2\overline{t}_{n-1}}
&\frac{1-\f_2\overline{f(z)}}{1-t_2\overline{z}}\\
\vdots & \vdots & \ddots & \vdots & \vdots \\
\frac{1-\f_{n-1}\overline{\f}_{1}}{1-t_{n-1}\overline{t}_{1}}&
\frac{1-\f_{n-1}\overline{\f}_{2}}{1-t_{n-1}\overline{t}_{2}}&
\ldots & |f^\prime(t_{n-1})| & \frac{1-\f_{n-1}\overline{f(z)}}{1-t_{n-1}\overline{t}_n}\\
\frac{1-f(z)\overline{\f}_{1}}{1-z\overline{t}_{1}}&
\frac{1-f(z)\overline{\f}_{2}}{1-z\overline{t}_{2}}&\ldots &
\frac{1-f(z)\overline{\f}_{n-1}}{1-z\overline{t}_{n-1}} & \frac{1-|f(z)|^2}{1-|z|^2}\end{bmatrix}.
\label{4.29}
\end{equation}
Observe that the leading $(n-1)\times(n-1)$ submatrix of $P^f$ is the boundary Schwarz-Pick matrix
$P^f(t_1,\ldots,t_{n-1})$, while the bottom row in  $P^f$ (without the rightmost entry) 
can be written in terms of the matrices \eqref{4.1} as $\; (E^*-f(z)M^*)(I-zT^*)^{-1}$. 
Thus, $P^f$ can be written in a more compact form
\begin{equation}
P^f=\begin{bmatrix} P^f(t_1,\ldots,t_{n-1}) &  (I-\overline{z}T)^{-1}(E-M\overline{f(z)})\\
(E^*-f(z)M^*)(I-zT^*)^{-1} & {\displaystyle\frac{1-|f(z)|^2}{1-|z|^2}}\end{bmatrix}.
\label{4.30}
\end{equation}
Let $P_{n-1}^{\bgam}$ be the matrix defined via formulas \eqref{4.2} and let
\begin{equation}
\mathbb P^{\bgam}(z):=\begin{bmatrix} P_{n-1}^{\bgam} &  (I-\overline{z}T)^{-1}(E-M\overline{f(z)})\\
(E^*-f(z)M^*)(I-zT^*)^{-1} & {\displaystyle\frac{1-|f(z)|^2}{1-|z|^2}}\end{bmatrix}.
\label{4.30a}
\end{equation}
Taking into account the formula \eqref{4.31} for $G$ and comparing \eqref{4.2} and \eqref{4.30} with
\eqref{2.1} and \eqref{4.30a}, respectively, leads us to equalities
\begin{equation}
P_{n-1}^{\bgam}=P^f(t_1,\ldots,t_{n-1})+G\quad \mbox{and}\quad
\mathbb P^{\bgam}(z)=P^f+\begin{bmatrix} G & 0 \\ 0 & 0\end{bmatrix}. 
\label{4.30g}
\end{equation}
By Remark \ref{R:2.4}, the Schwarz-Pick matrices $P^f$ and 
$P^f(t_1,\ldots,t_{n-1})$ are positive semidefinite and saturated. Moreover, since 
$f\in\cB^\circ_{k}$, we have 
$$
{\rm rank} \, P^f={\rm rank} \, P^f(t_1,\ldots,t_{n-1})=k,
$$ 
by Lemma \ref{L:2.1}. Then it follows from \eqref{4.30g} by Remark \ref{R:4.5} that
\begin{equation}
{\rm rank} \, P_{n-1}^{\bgam}={\rm rank} \, P^f(t_1,\ldots,t_{n-1})+{\rm rank} \, G=n-1
\label{4.30f} 
\end{equation}
(i.e., $P_{n-1}^{\bgam}$ is positive definite and hence $\bgam=\{\gamma_1,\ldots,\gamma_{n-1}\}$ is admissible) and 
\begin{equation}
{\rm rank} \, \mathbb P^{\bgam}(z)={\rm rank} \, P^f+{\rm rank} \, \sbm{G & 0 \\ 0 & 0}=n-1
\quad\mbox{for all}\quad z\in\D.
\label{4.30b} 
\end{equation}
By \eqref{4.30f} and  \eqref{4.30b}, the Schur complement of the block $P_{n-1}^{\bgam}$ in \eqref{4.30} is equal to 
zero for every $z\in\D$:
\begin{equation}
\frac{1-|f(z)|^2}{1-|z|^2}-(E^*-f(z)M^*)(I-zT^*)^{-1}(P_{n-1}^{\bgam})^{-1}
(I-\overline{z}T)^{-1}(E-M\overline{f(z)})=0.
\label{4.41}
\end{equation} 
The rational matrix-function $\Theta^{\bgam}$ constructed from $\bgam=\{\gamma_1,\ldots,\gamma_{n-1}\}$ via formula 
\eqref{4.4}
satisfies the identity \eqref{4.13a}. Multiplying the latter identity (with $\zeta=z$)
by the row-vector $\begin{bmatrix}1 & -f(z)\end{bmatrix}$ on the left and by its adjoint on the right gives 
\begin{align*}
&(E^*-f(z)M^*)(I-zT^*)^{-1}(P_{n-1}^{\bgam})^{-1}
(I-\overline{z}T)^{-1}(E-M\overline{f(z)})\\
&=\begin{bmatrix}1 & -f(z)\end{bmatrix}\frac{J-\Theta^{\bgam}(z)J\Theta^{\bgam}(z)^*}{1-|z|^2}\begin{bmatrix}1 \\ 
-\overline{f(z)}\end{bmatrix}\\
&=\frac{1-|f(z)|^2}{1-|z|^2}-\begin{bmatrix}1 & 
-f(z)\end{bmatrix}\frac{\Theta^{\bgam}(z)J\Theta^{\bgam}(z)^*}{1-|z|^2}\begin{bmatrix}1 \\
-\overline{f(z)}\end{bmatrix}
\end{align*}
which, being combined with \eqref{4.41}, implies
\begin{equation}
\begin{bmatrix}1 & -f(z)\end{bmatrix}\frac{\Theta^{\bgam}(z)J\Theta^{\bgam}(z)^*}{1-|z|^2}
\begin{bmatrix}1 \\ -\overline{f(z)}\end{bmatrix}=0 \quad \mbox{for all} \; \; z\in\D.
\label{4.38}
\end{equation}
Let us consider the functions 
\begin{equation}
g=\theta^{\bgam}_{11}-f\theta^{\bgam}_{21}\quad\mbox{and}\quad 
\cE=\frac{f\theta^{\bgam}_{22}-\theta^{\bgam}_{12}}{\theta^{\bgam}_{11}-f\theta^{\bgam}_{21}}.
\label{4.39}
\end{equation}
The function $g$ is rational and due to \eqref{4.4}, 
$g(t_n)=\theta^{\bgam}_{11}(t_n)-f(t_n)\theta^{\bgam}_{21}(t_n)=1$. Hence, $g\not\equiv 0$ and  
the rational function $\cE$ in \eqref{4.39} is well defined. Again, due to \eqref{4.4} and the 
$n$-th interpolation condition in \eqref{1.1},
\begin{equation}
\cE(t_n)=f(t_n)\theta^{\bgam}_{22}(t_n)-\theta^{\bgam}_{12}(t_n)=f(t_n)=w_n.
\label{4.48}
\end{equation}
Using the functions \eqref{4.39} we now rewrite equality \eqref{4.38} as 
$$
0=|g(z)|^2\cdot\begin{bmatrix}1 & -\cE(z)\end{bmatrix}\frac{J}{1-|z|^2}
\begin{bmatrix}1 \\ -\overline{\cE(z)}\end{bmatrix}=\frac{|g(z)|^2(1-|\cE(z)|^2)}{1-|z|^2}.
$$
Since the latter equality holds for all $z\in\D$ and $g\not\equiv 0$, it follows that $|\cE(z)|=1$
for all $z\in\D$ so that $\cE$ is a unimodular constant. By \eqref{4.48}, $\cE\equiv w_n$. 
Now representation \eqref{4.17} follows from the the second formula in \eqref{4.39}.

\smallskip

Finally, if $k=\deg \, f<n-1$, then $n-k-1$ parameters in \eqref{4.30c} can be increased to 
produce various admissible tuples $\bgam$ such that $f=f_{\bgam}$. On the other hand, if 
$f\in\cB_{n-1}$ admits two different representations \eqref{4.17}, then for one of them 
(say, based on an admissible tuple $\bgam=\{\gamma_1,\ldots,\gamma_{n-1}\}$), we must have 
$\gamma_i\neq |f^\prime(t_i)|$ for some $i\in\{1,\ldots,n-1\}$. Then  $x_iw_n=y_i$, by part 
(2) of the theorem, and hence, $\deg \, f<n-1$, by part (3). Thus, the representation 
$f=f_{\bgam}$ is unique if and only if $\deg \, f=n-1$, which completes the proof of the 
theorem.
\end{proof}
We now reformulate Theorem \ref{T:4.5} in the form that is more convenient for numerical
computations. To this end, we let 
\begin{equation}
{\bf p}_n=\begin{bmatrix}p_{1,n}\\ \vdots \\ p_{n-1,n}\end{bmatrix},
\quad\mbox{where}\quad p_{i,n}=\frac{1-\f_i\overline{\f}_n}{1-t_i\overline{t}_n},
\label{4.47}  
\end{equation}
and, for an admissible tuple $\bgam=\{\gamma_1,\ldots,\gamma_{n-1}\}$ and the corresponding
$P_{n-1}^{\bgam}>0$, we let $\boldsymbol{\Delta}^{\bgam}=(P_{n-1}^{\bgam})^{-1}{\bf p}_n$.
If we denote by $P_{n-1,i}^{\bgam}({\bf p}_n)$ the matrix obtained from $P_{n-1}^{\bgam}$ by replacing its $i$-th column 
by ${\bf p}_n$, then by Cramer's rule, we have
\begin{equation}
\boldsymbol{\Delta^{\bgam}}=\sbm{\Delta^{\bgam}_1\\ \vdots \\ \Delta^{\bgam}_{n-1}},\qquad\Delta^{\bgam}_i=\frac{\det 
P_{n-1,i}^{\bgam}({\bf p}_n)}{\det P_{n-1}^{\bgam}}\quad
(i=1,\ldots,n-1).
\label{4.50}
\end{equation}
\begin{theorem}
Given data $(t_i, \, \f_i)\in\T^2$ ($i=1,\ldots,n$), let ${\bf p}_n$ be defined as in \eqref{4.47}.
For any admissible tuple $\bgam=\{\gamma_1,\ldots,\gamma_{n-1}\}$, the function 
\begin{equation}
f_{\bgam}(z)=\f_n\cdot \frac{1-(1-z\overline{t}_n)\cdot {\displaystyle\sum_{i=1}^{n-1}
\frac{\Delta^{\bgam}_i}{1-z\overline{t}_i}}}{1-(1-z\overline{t}_n)\cdot
{\displaystyle\sum_{i=1}^{n-1}\frac{\overline{\f}_i\f_n\Delta^{\bgam}_i}{1-z\overline{t}_i}}}
\label{4.52}
\end{equation}
with the numbers $\Delta^{\bgam}_i$ defined as in \eqref{4.50},
is the Blaschke product of degree $\deg \, f_{\bgam}=n-1-\#\{i\in\{1,\ldots,n-1\}: \; 
\Delta^{\bgam}_i=0\}$ and satisfies conditions \eqref{1.2}. Moreover, 
$|f_{\bgam}^\prime(t_i)|=\gamma_i$ if and only if $\Delta^{\bgam}_i\neq 0$ and
$|f_{\bgam}^\prime(t_i)|<\gamma_i$ otherwise.
\label{T:4.5a}
\end{theorem}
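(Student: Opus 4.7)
My plan is to show that formula \eqref{4.52} is just a rewriting of formula \eqref{4.17} in which the quantities $x_i \f_n - y_i$ (which controlled everything in Theorem \ref{T:4.5}) are replaced, up to non-vanishing scalar factors, by the Cramer-type quantities $\Delta^{\bgam}_i$. Once this identification is made, statements about the degree and about $|f_{\bgam}'(t_i)|$ transfer directly from parts (2) and (3) of Theorem \ref{T:4.5}.

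The first step is to observe a clean link between ${\bf p}_n$ in \eqref{4.47} and the vector $E\f_n - M$ used in \eqref{4.12}. Since $|t_n|=|\f_n|=1$, each entry $p_{i,n}=\frac{1-\f_i\overline{\f}_n}{1-t_i\overline{t}_n}$ can be rewritten as $\frac{t_n(\f_n-\f_i)}{\f_n(t_n-t_i)}$, and comparing with the $i$-th entry $\frac{\f_n-\f_i}{t_n-t_i}$ of $(t_n I - T)^{-1}(E\f_n - M)$ gives the matrix identity
\begin{equation*}
(t_n I - T)^{-1}(E\f_n - M) = \frac{\f_n}{t_n}\,{\bf p}_n.
\end{equation*}
Combined with \eqref{4.12}, this yields $X\f_n - Y = \frac{\f_n}{t_n}(I - t_n T^*)\boldsymbol{\Delta}^{\bgam}$, and therefore in \eqref{4.22}
\begin{equation*}
\Psi(z) = \frac{\f_n}{t_n}\,(I - zT^*)^{-1}(I - t_n T^*)\boldsymbol{\Delta}^{\bgam}.
\end{equation*}
In particular, $x_i\f_n - y_i = (1 - t_n\overline{t}_i)\frac{\f_n}{t_n}\Delta^{\bgam}_i$, so $x_i\f_n = y_i$ if and only if $\Delta^{\bgam}_i = 0$.

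Next I would substitute this expression for $\Psi(z)$ into \eqref{4.19}. Using $(z - t_n)\frac{\f_n}{t_n} = -\f_n(1 - z\overline{t}_n)$ together with $E^*(I - zT^*)^{-1}\boldsymbol{\Delta}^{\bgam} = \sum_i \frac{\Delta^{\bgam}_i}{1 - z\overline{t}_i}$ and the analogous identity for $M^*$, the pair $(N(z), D(z))$ simplifies directly to
\begin{equation*}
N(z) = \f_n\Bigl[1 - (1 - z\overline{t}_n)\sum_{i=1}^{n-1}\frac{\Delta^{\bgam}_i}{1 - z\overline{t}_i}\Bigr],\qquad D(z) = 1 - (1 - z\overline{t}_n)\sum_{i=1}^{n-1}\frac{\overline{\f}_i\f_n\Delta^{\bgam}_i}{1 - z\overline{t}_i},
\end{equation*}
so that $f_{\bgam} = N/D$ is precisely \eqref{4.52}.

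Finally, the remaining assertions follow with no extra work. By Cramer's rule \eqref{4.50}, $\Delta^{\bgam}_i = 0$ iff the $i$-th column of $P_{n-1}^{\bgam}$ can be replaced by ${\bf p}_n$ without affecting the determinant up to that row; and by the identification $x_i\f_n - y_i = (1-t_n\overline{t}_i)\frac{\f_n}{t_n}\Delta^{\bgam}_i$ (the prefactor is non-zero since $t_i\neq t_n$), $\Delta^{\bgam}_i = 0 \iff x_i\f_n = y_i$. Thus part (3) of Theorem \ref{T:4.5} gives $\deg f_{\bgam} = n - 1 - \#\{i:\Delta^{\bgam}_i = 0\}$, and part (2) gives $|f_{\bgam}'(t_i)| = \gamma_i$ precisely when $\Delta^{\bgam}_i \neq 0$, with strict inequality otherwise. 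The only step requiring any care is the scalar bookkeeping in the identity between ${\bf p}_n$ and $(t_n I - T)^{-1}(E\f_n - M)$; once that is in hand, the rest is an essentially mechanical substitution.
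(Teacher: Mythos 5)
Your proposal is correct and follows essentially the same route as the paper: you establish the identity $x_i\f_n-y_i=(1-t_n\overline{t}_i)\frac{\f_n}{t_n}\Delta^{\bgam}_i=(\overline{t}_n-\overline{t}_i)\Delta^{\bgam}_i\f_n$ via ${\bf p}_n=(I-\overline{t}_nT)^{-1}(E-M\overline{\f}_n)$, rewrite the numerator and denominator of \eqref{4.17} accordingly to obtain \eqref{4.52}, and transfer all assertions from Theorem \ref{T:4.5}. The only cosmetic difference is that you compute $\bigl[\begin{smallmatrix}N\\ D\end{smallmatrix}\bigr]$ through $\Psi$ and \eqref{4.19} while the paper computes $\Theta^{\bgam}(z)\bigl[\begin{smallmatrix}\f_n\\ 1\end{smallmatrix}\bigr]$ directly from \eqref{4.14}, which is the same calculation.
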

\begin{proof}
Observe that the column \eqref{4.47} can be written in terms of the matrices
\eqref{4.1} as ${\bf p}_n=(I-\overline{t}_nT)^{-1}(E-M\overline{\f}_n)$.
Let $x_i$ and $y_i$ be the numbers defined in \eqref{4.9}. From formulas \eqref{4.9} and \eqref{4.50}, we have
\begin{align}
x_i\f_n-y_i&=(1-t_n\overline{t}_i)\, {\bf e}_i^*(P^{\bgam}_{n-1})^{-1}(t_n I-T)^{-1}(E\f_n-M)
\notag\\
&=(1-t_n\overline{t}_i)\, {\bf e}_i^*(P^{\bgam}_{n-1})^{-1}(I-\overline{t}_nT)^{-1}
(E-M\overline{\f}_n)\f_n\overline{t}_n\notag \\
&=(1-t_n\overline{t}_i)\, {\bf e}_i^*(P^{\bgam}_{n-1})^{-1}{\bf p}_n\f_n\overline{t}_n\notag \\
&=(\overline{t}_n-\overline{t}_i)\, {\bf e}_i^*\boldsymbol{\Delta}^{\bgam}\f_n
=(\overline{t}_n-\overline{t}_i)\Delta_i^{\bgam} \f_n.\label{4.51}
\end{align}
Since $t_n\neq t_i$ and $w_n\neq 0$, it now follows that 
\begin{equation}
x_i\f_n=y_i \; \Longleftrightarrow \; 
\Delta^{\bgam}_i=0 \; \Longleftrightarrow \; \det P_{n-1,i}^{\bgam}({\bf p}_n)=0.
\label{4.49a}
\end{equation}
We next observe that by the second representation for $\Theta^{\bgam}$ in \eqref{4.14} and 
\eqref{4.51},
\begin{align*} 
\Theta^{\bgam}(z)\begin{bmatrix}\f_n \\ 1\end{bmatrix}
&=\begin{bmatrix}\f_n \\ 1\end{bmatrix}+
\sum_{i=1}^{n-1}\begin{bmatrix}1 \\ \overline{\f}_i\end{bmatrix}\cdot 
\frac{(z-t_n)(x_i\f_n-y_i)}{(1-z\overline{t}_i)(1-t_n\overline{t}_i)}\notag\\  
&=\begin{bmatrix}\f_n \\ 1\end{bmatrix}+
\sum_{i=1}^{n-1}\begin{bmatrix}1 \\ \overline{\f}_i\end{bmatrix}\cdot 
\frac{(z-t_n)(\overline{t}_n-\overline{t}_i)\Delta_i^{\bgam} 
\f_n}{(1-z\overline{t}_i)(1-t_n\overline{t}_i)}\notag\\
&=\begin{bmatrix}\f_n \\ 1\end{bmatrix}-\sum_{i=1}^{n-1}\begin{bmatrix}1 \\ 
\overline{\f}_i\end{bmatrix}\cdot
\frac{(1-z\overline{t}_n)\Delta^{\bgam}_i\f_n}{1-z\overline{t}_i},
\end{align*}
from which it follows that formulas \eqref{4.52} and \eqref{4.17} represent the same function $f_{\bgam}$.
Now all statements in Theorem \ref{T:4.5a} follow from their counter-parts in Theorem \ref{T:4.5}, by \eqref{4.49a}. 
\end{proof}

\section{Existence of $\cB_{n-2}$-solutions}
\setcounter{equation}{0}

We will write $(\zeta_1,\ldots,\zeta_k)\in\mathcal O$ if given $k$ points
$\zeta_1,\ldots,\zeta_k\in\T$ are counter clockwise oriented on $\T$. For example,
if $\zeta_1=1$, then $(1,\zeta_2,\ldots,\zeta_k)\in\mathcal O$ means that
$\arg \zeta_{i+1}>\arg \zeta_{i}$ for all $i=1,\ldots,k-1$. From now on, we will assume that
the interpolation nodes $t_1,\ldots, t_n$ in problem \eqref{1.2} are counter clockwise
oriented. 
\begin{theorem}
The problem \eqref{1.2} has a non-constant solution $f\in\cB_{n-2}$ if and only if there exist three target
values $\f_i,\f_j,\f_k$ having the same orientation as $t_i,t_j,t_k$ on $\T$.
\label{T:6.1}
\end{theorem}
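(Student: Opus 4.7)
The proof rests on the parametrization in Theorem~\ref{T:4.5} (and its computational form in Theorem~\ref{T:4.5a}): by part~(3), a non-constant $f\in\cB_{n-2}$ solving \eqref{1.2} exists if and only if there is an admissible tuple $\bgam=\{\gamma_1,\ldots,\gamma_{n-1}\}$ with $\Delta^{\bgam}_\ell=0$ (equivalently $x_\ell\f_n=y_\ell$, or $\det P^{\bgam}_{n-1,\ell}({\bf p}_n)=0$) for at least one---but not all---indices $\ell$. The proof therefore reduces to matching this algebraic admissibility criterion against the geometric condition on cyclic orientation of three target values.

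For the ``only if'' direction, let $f\in\cB_{n-2}$ be a non-constant solution of degree $d\le n-2$. Since $f$ is a finite Blaschke product, $\theta\mapsto\arg f(e^{i\theta})$ is strictly monotone increasing with total variation $2\pi d$. Choosing a continuous lift and writing $\Phi_j=\arg f(t_j)$ (where $t_j=e^{i\theta_j}$), one has $\Phi_1<\Phi_2<\cdots<\Phi_n$ and, cyclically, $\Phi_{n+1}:=\Phi_1+2\pi d$, so that the $n$ cyclic gaps $g_k=\Phi_{k+1}-\Phi_k$ are all positive and sum to $2\pi d\le 2\pi(n-2)$. Matching each $g_k$ with the counterclockwise arc from $\f_k$ to $\f_{k+1}$ (which is $g_k$ modulo $2\pi$) and tracking adjacent equalities $\f_k=\f_{k+1}$ (which force $g_k\ge 2\pi$), the constraint $d\le n-2$ can be rearranged to forbid the sequence $(\f_1,\ldots,\f_n)$ from being ``fully monotone clockwise'', and this---after cyclic relabeling if needed---produces three indices $i<j<k$ for which $(\f_i,\f_j,\f_k)$ shares the counterclockwise orientation of $(t_i,t_j,t_k)$ on $\T$.

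For the ``if'' direction, assume such a triple exists. The natural strategy is a continuity/deformation argument inside the admissible parameter cone. Start at some $\bgam^{(0)}$ with all $\gamma_j$ large, so that $P^{\bgam^{(0)}}_{n-1}$ is well conditioned and every $\Delta^{\bgam^{(0)}}_\ell$ is nonzero. Because $\det P^{\bgam}_{n-1,\ell}({\bf p}_n)$ is affine in each $\gamma_k$ with $k\ne\ell$, decreasing one coordinate of $\bgam$ while keeping $P^{\bgam}_{n-1}>0$ traces a continuous curve in $\C$ under $\bgam\mapsto \det P^{\bgam}_{n-1,\ell}({\bf p}_n)$; the ccw-triple hypothesis is the geometric input that guarantees this curve may be driven through $0$ for an appropriate $\ell$ and for some $\bgam$ still in the admissible cone. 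An alternative, perhaps cleaner route, is to apply Theorem~\ref{T:2.4} to the $n$-point modified problem {\bf MP} with weights $\gamma_1,\ldots,\gamma_n\ge 0$ chosen so that the full Pick matrix $P_n$ is minimally positive of rank at most $n-2$; Theorem~\ref{T:2.4} then delivers a Blaschke product of that degree, and the ccw-triple hypothesis is precisely what permits such weights to exist.

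The main obstacle, and the technical core of the argument, lies in this ``if'' direction: converting the geometric orientation statement about three $\f$-values either into a sign-change for the determinantal polynomial $\det P^{\bgam}_{n-1,\ell}({\bf p}_n)$ (the continuity approach) or into a matrix-completion statement for $P_n$ (the Theorem~\ref{T:2.4} approach). I expect this step to require a careful inspection of the $3\times 3$ principal minors of $P^{\bgam}_{n-1}$ and of its bordered variants, exploiting the special structure of the entries $(1-\f_i\overline{\f}_j)/(1-t_i\overline{t}_j)$ in relation to the cyclic orientation on $\T$.
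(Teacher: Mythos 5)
Your reduction of the problem to Theorem \ref{T:4.5a} is correct (a non-constant solution in $\cB_{n-2}$ exists iff some admissible $\bgam$ has $\Delta^{\bgam}_\ell=0$ for at least one but not all $\ell$), and your ``only if'' sketch is essentially the winding-number argument that the paper itself only cites from the literature. But in the ``if'' direction you stop exactly where the real work begins: you name two candidate strategies and then state that converting the orientation hypothesis into an algebraic statement about $\det P^{\bgam}_{n-1,\ell}({\bf p}_n)$ is ``the technical core'' which you \emph{expect} to require inspecting $3\times 3$ minors. That conversion is the entire content of the ``if'' part, and it is not supplied. Moreover, the continuity strategy as you state it does not run: $\det P^{\bgam}_{n-1,\ell}({\bf p}_n)$ is complex-valued as a function of the real parameters $\gamma_k$, so ``driving a continuous curve through $0$'' is not an intermediate-value argument; one must exhibit a real, positive value of a single $\gamma$ that annihilates it.

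The missing ingredient is the elementary but decisive observation (Lemma \ref{L:5.1} and Corollary \ref{C:5.2} of the paper) that
$p_{ij}p_{jk}p_{ki}=G(\f_i,\f_j,\f_k)/G(t_i,t_j,t_k)$ with
$G(\zeta_1,\zeta_2,\zeta_3)=-i(1-\zeta_1\overline{\zeta}_2)(1-\zeta_2\overline{\zeta}_3)(1-\zeta_3\overline{\zeta}_1)$
real, and that this product is positive precisely when the three target values are distinct and have the same orientation as the corresponding nodes. After relabeling so that the good triple is $(\f_{n-2},\f_{n-1},\f_n)$, this gives $q:=p_{n-1,n-2}p_{n-2,n}/p_{n-1,n}=p_{n-1,n-2}p_{n-2,n}p_{n,n-1}/|p_{n-1,n}|^2>0$. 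The paper then takes $\gamma_1,\ldots,\gamma_{n-3}$ huge, sets $\gamma_{n-2}=q+X$ with $X$ a correction term satisfying $|X|<q/3$, and takes $\gamma_{n-1}$ large; Schur-complement estimates show $P^{\bgam}_{n-1}>0$ precisely because $\gamma_{n-2}\approx q>0$ dominates ${\bf b}^*(P^{\bgam}_{n-3})^{-1}{\bf b}$, while a block-determinant factorization gives $\det P^{\bgam}_{n-1,n-1}({\bf p}_n)=p_{n-1,n}\,(\gamma_{n-2}-q-X)\,\det(P^{\bgam}_{n-3}-{\bf d}{\bf c}^*p_{n-1,n}^{-1})=0$. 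Without the positivity of $q$ there is no way to make $\gamma_{n-2}$ simultaneously an admissible (positive) diagonal entry and the root of that determinant, which is exactly the bridge between the geometric hypothesis and the algebraic criterion that your proposal leaves unbuilt.
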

As was pointed out in \cite{sw}, the ``only if" part follows by the winding number argument:
the absence of the requested triple means that (up to rotation of $\T$)
$\arg \f_n\le \arg \f_{n-1}\le\ldots\le \f_1$ with at least one strict 
inequality, 
and then the degree of any Blaschke product interpolation this data is at least $n-1$.
In this section we prove the ``if" part in Theorem \ref{T:6.1}.
\begin{lemma}
Given three points $\zeta_i=e^{\vartheta_i}$ ($i=1,2,3$), the quantity
$$
G(\zeta_1,\zeta_2,\zeta_3):=-i(1-\zeta_1\overline{\zeta}_2)(1-\zeta_2\overline{\zeta}_3)
(1-\zeta_3\overline{\zeta}_1)
$$
is real. Moreover,
$$
G(\zeta_1,\zeta_2,\zeta_3)>0
\Longleftrightarrow (\zeta_1,\zeta_2,\zeta_3)\in\mathcal O,\quad
G(\zeta_1,\zeta_2,\zeta_3)<0
\Longleftrightarrow (\zeta_1,\zeta_3,\zeta_2)\in\mathcal O.
$$
\label{L:5.1}
\end{lemma}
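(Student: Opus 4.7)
The plan is to reduce $G(\zeta_1,\zeta_2,\zeta_3)$ to a manifestly real trigonometric expression by using the standard factorization
$$
1-e^{i\alpha}=-2ie^{i\alpha/2}\sin(\alpha/2),
$$
and then read off the sign from the orientation of the three points. Writing $\zeta_j=e^{i\vartheta_j}$, each factor of $G$ becomes
$$
1-\zeta_j\overline{\zeta}_k=-2i\,e^{i(\vartheta_j-\vartheta_k)/2}\sin\!\tfrac{\vartheta_j-\vartheta_k}{2}.
$$
Multiplying the three factors corresponding to $(j,k)=(1,2),(2,3),(3,1)$, the exponents telescope to $0$, the prefactor $(-2i)^3=8i$ cancels the leading $-i$, and one obtains
$$
G(\zeta_1,\zeta_2,\zeta_3)=8\sin\!\tfrac{\vartheta_1-\vartheta_2}{2}\sin\!\tfrac{\vartheta_2-\vartheta_3}{2}\sin\!\tfrac{\vartheta_3-\vartheta_1}{2},
$$
which is visibly real. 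This disposes of the first assertion.

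For the sign, I would exploit the rotational invariance of $G$ (which is clear from its construction) to reduce to the case $\vartheta_1=0$ with $\vartheta_2,\vartheta_3\in(0,2\pi)$. The condition $(\zeta_1,\zeta_2,\zeta_3)\in\mathcal O$ then becomes $0<\vartheta_2<\vartheta_3<2\pi$, while $(\zeta_1,\zeta_3,\zeta_2)\in\mathcal O$ becomes $0<\vartheta_3<\vartheta_2<2\pi$. In the first case the three half-differences $(\vartheta_1-\vartheta_2)/2$, $(\vartheta_2-\vartheta_3)/2$, $(\vartheta_3-\vartheta_1)/2$ lie in $(-\pi,0)$, $(-\pi,0)$, $(0,\pi)$ respectively, so the sines have signs $-,-,+$, giving $G>0$. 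In the second case the middle factor flips sign (its argument lies in $(0,\pi)$ instead), yielding $-,+,+$ and hence $G<0$.

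I do not foresee a real obstacle here; the identification $1-e^{i\alpha}=-2ie^{i\alpha/2}\sin(\alpha/2)$ collapses the whole statement to elementary trigonometry. The only subtle point is choosing the correct branch of $\arg$ when reducing to $\vartheta_1=0$, and for that it is cleanest to simply pick representatives $\vartheta_2,\vartheta_3\in(0,2\pi)$ once $\vartheta_1$ has been normalized; after that, the sign computation is immediate. Since the two orientations $(\zeta_1,\zeta_2,\zeta_3)\in\mathcal O$ and $(\zeta_1,\zeta_3,\zeta_2)\in\mathcal O$ exhaust all configurations of three distinct points on $\T$, the ``$\Leftarrow$'' directions combined with the fact that $G\neq 0$ for distinct points automatically yield the stated equivalences.
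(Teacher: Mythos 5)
Your proof is correct and takes essentially the same approach as the paper: both reduce $G$ to the real product $8\sin\frac{\vartheta_1-\vartheta_2}{2}\sin\frac{\vartheta_2-\vartheta_3}{2}\sin\frac{\vartheta_3-\vartheta_1}{2}$ and read off the sign after normalizing $\vartheta_1=0$ with $\vartheta_2,\vartheta_3\in(0,2\pi)$. The only difference is that the paper leaves the ``straightforward computation'' implicit, while you make it explicit via the factorization $1-e^{i\alpha}=-2ie^{i\alpha/2}\sin(\alpha/2)$.
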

\begin{proof}  
Since $G$ is rotation-invariant, we may assume without loss of generality that
$\zeta_1=1$ (i.e., $\vartheta_1=0$). Then a straightforward computation shows that
$$
G(1,\zeta_2,\zeta_3)=8\sin{\frac{\vartheta_3-\vartheta_2}{2}}\sin{\frac{\vartheta_2}{2}}
\sin{\frac{\vartheta_3}{2}},
$$
which implies all the desired statements.
\end{proof}
\begin{corollary}
The product of any three off-diagonal entries $p_{ij}, p_{jk}, p_{ki}$ in the matrix \eqref{2.4},
$$
p_{ij}p_{jk}p_{ki}=\frac{1-\f_i\overline{\f}_j}{1-t_i\overline{t}_j}\cdot
\frac{1-\f_j\overline{\f}_k}{1-t_j\overline{t}_k}\cdot
\frac{1-\f_k\overline{\f}_i}{1-t_k\overline{t}_i}=\frac{G(\f_i,\f_j,\f_k)}{G(t_i,t_j,t_k)}
$$
is positive if and only if $\f_i,\f_j,\f_k$ are all distinct and have the same orientation on $\T$
as $t_i,t_j,t_k$.
\label{C:5.2}
\end{corollary}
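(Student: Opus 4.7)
The proof is essentially a direct bookkeeping exercise using the definition of $G$ and Lemma \ref{L:5.1}. The plan has three short steps.

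First, I would verify the displayed identity by unwinding the definition of $G$. By the formula
$$G(\zeta_1,\zeta_2,\zeta_3)=-i(1-\zeta_1\overline{\zeta}_2)(1-\zeta_2\overline{\zeta}_3)(1-\zeta_3\overline{\zeta}_1),$$
we have $(1-\f_i\overline{\f}_j)(1-\f_j\overline{\f}_k)(1-\f_k\overline{\f}_i)=i\,G(\f_i,\f_j,\f_k)$, and the analogous equality for the $t$'s. Taking the quotient, the factors of $i$ cancel, yielding
$$p_{ij}p_{jk}p_{ki}=\frac{G(\f_i,\f_j,\f_k)}{G(t_i,t_j,t_k)}.$$
This is purely algebraic and requires no further input.

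Second, I would observe that $G(t_i,t_j,t_k)$ is nonzero: the interpolation nodes are pairwise distinct, and Lemma \ref{L:5.1} (together with the explicit sine-product formula for $G(1,\zeta_2,\zeta_3)$ derived in its proof) shows that $G(\zeta_1,\zeta_2,\zeta_3)$ vanishes precisely when two of the arguments coincide. By Lemma \ref{L:5.1}, the sign of $G(t_i,t_j,t_k)$ records the orientation of the triple $(t_i,t_j,t_k)$ on $\T$.

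Third, I would combine these facts. The ratio $G(\f_i,\f_j,\f_k)/G(t_i,t_j,t_k)$ is strictly positive if and only if both $G$-values are nonzero and of the same sign. Nonvanishing of $G(\f_i,\f_j,\f_k)$ forces $\f_i,\f_j,\f_k$ to be pairwise distinct, and, given this, the matching-sign condition is by Lemma \ref{L:5.1} exactly the statement that $(\f_i,\f_j,\f_k)$ and $(t_i,t_j,t_k)$ have the same cyclic orientation on $\T$. This proves the corollary in both directions.

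There is no genuine obstacle here; the only place where care is required is tracking the factor $-i$ in the definition of $G$ when verifying the closed-form identity, and noting that the zero case of $G$ is absorbed into the requirement that $\f_i,\f_j,\f_k$ be distinct.
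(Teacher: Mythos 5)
Your proof is correct and is exactly the argument the paper intends: the corollary is stated without a written proof as an immediate consequence of Lemma \ref{L:5.1}, and your three steps (the algebraic cancellation of the factor $-i$, the nonvanishing of $G(t_i,t_j,t_k)$ for distinct nodes, and the sign-matching via the lemma) spell out precisely that deduction. No gaps.
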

\begin{proof}[Proof of Theorem \ref{T:6.1}] Let us assume that there are three target values having 
the same orientation 
on $\T$ as their respective interpolation nodes. By re-enumerating, we may assume without loss of generality that 
these values are $\f_{n-2}, \f_{n-1}, \f_n$ so that 
\begin{equation}
q:=\frac{p_{n-1,n-2}p_{n-2,n}}{p_{n-1,n}}=\frac{p_{n-1,n-2}p_{n-2,n}p_{n,n-1}}{|p_{n-1,n}|^2}>0,
\label{6.1}
\end{equation}
by Corollary \ref{C:5.2} and since $p_{n-1,n}=\overline{p}_{n,n-1}$.
We will show that in this case, there is an admissible tuple $\bgam$ so that 
the number $\Delta^{\bgam}_{n-1}$ defined in \eqref{4.50} equals zero. To this end, let 
$$
{\bf b}=\begin{bmatrix}p_{1,n-2} \\ \vdots \\ p_{n-3,n-2}\end{bmatrix},\quad
{\bf c}=\begin{bmatrix}p_{1,n-1} \\ \vdots \\ p_{n-3,n-1}\end{bmatrix},\quad 
{\bf d}=\begin{bmatrix}p_{1,n} \\ \vdots \\ p_{n-3,n}\end{bmatrix}.
$$
Let $\bgam=\{\gamma_1,\ldots,\gamma_{n-1}\}$ be any admissible tuple. Then the matrix 
$P^{\bgam}_{n-1}$ \eqref{4.2} and the column ${\bf p}_n$ \eqref{4.47} can be written as 
\begin{equation}
P^{\bgam}_{n-1}=\begin{bmatrix}P^{\bgam}_{n-3} & {\bf b} & {\bf c} \\ {\bf b}^* & \gamma_{n-2}& 
p_{n-2,n-1}\\ {\bf c}^* & p_{n-1,n-2} & \gamma_{n-1}\end{bmatrix}\quad\mbox{and}\quad 
{\bf p}_n=\begin{bmatrix}{\bf d} \\ p_{n-2,n} \\ p_{n-1,n}\end{bmatrix}.
\label{6.2}
\end{equation}
Replacing the rightmost column in $P^{\bgam}_{n-1}$ by ${\bf p}_n$ produces
\begin{equation}
P^{\bgam}_{n-1,n-1}({\bf p}_n)=\begin{bmatrix}P^{\bgam}_{n-3} & {\bf b} & {\bf d} \\ {\bf b}^* & 
\gamma_{n-2}& p_{n-2,n}\\ {\bf c}^* & p_{n-1,n-2} & p_{n-1,n}\end{bmatrix}.
\label{6.3}
\end{equation}
For any matrix $A$, we can make the entries of $(P^{\bgam}_{n-3}-A)^{-1}$ as small in modulus as we wish
by choosing the diagonal entries $\gamma_1,\ldots,\gamma_{n-3}$ in $P^{\bgam}_{n-3}$ big enough.
Thus, we choose $\gamma_1,\ldots,\gamma_{n-3}$ so huge that $P^{\bgam}_{n-3}>0$,
\begin{equation}
\det (P^{\bgam}_{n-3}-{\bf d}{\bf c}^*p_{n-1,n}^{-1})\neq 0,\quad {\bf b}^*
(P^{\bgam}_{n-3})^{-1}{\bf b}<\frac{q}{3},\quad |X|<\frac{q}{3},
\label{6.4}
\end{equation}
where $q>0$ is specified in \eqref{6.1} and where 
\begin{equation}
X=\left({\bf b}^*-\frac{p_{n-2,n}}{p_{n-1,n}}{\bf c}^*\right) 
\left(P^{\bgam}_{n-3}-{\bf d}p_{n-1,n}^{-1}{\bf c}^*\right)^{-1}\left({\bf b}-{\bf 
d}\frac{p_{n-1,n-2}}{p_{n-1,n}}\right).
\label{6.5}   
\end{equation}
Then we choose 
\begin{equation}
\gamma_{n-2}=q+X\quad\mbox{and}\quad \gamma_{n-1}\ge {\bf c}^*(P^{\bgam}_{n-3})^{-1}{\bf 
c}+\frac{3}{q}\cdot|p_{n-1,n-2}-{\bf c}^*(P^{\bgam}_{n-3})^{-1}{\bf b}|^2.
\label{6.6}   
\end{equation}
Then the tuple $\bgam=\{\gamma_1,\ldots,\gamma_{n-1}\}$ is admissible. Indeed, by \eqref{6.4} and 
\eqref{6.6}, the Schur complement of $P^{\bgam}_{n-3}$ in the matrix $P^{\bgam}_{n-2}=\sbm{P^{\bgam}_{n-3} & 
{\bf b} \\  {\bf b}^* & \gamma_{n-2}}$ is positive:
\begin{equation}
\gamma_{n-2}-{\bf b}^*(P^{\bgam}_{n-3})^{-1}{\bf b}=q+X-{\bf b}^*(P^{\bgam}_{n-3})^{-1}{\bf b}>
q-\frac{q}{3}-\frac{q}{3}=\frac{q}{3}>0,
\label{6.7}   
\end{equation}
and therefore, $P^{\bgam}_{n-2}$ is positive definite. We next use \eqref{6.7} and the second relation in 
\eqref{6.6} to show that the Schur complement of $P^{\bgam}_{n-2}$ in the 
matrix $P^{\bgam}_{n-1}$ is also positive:
\begin{align*}
&\gamma_{n-1}-\begin{bmatrix}{\bf c}^* & p_{n-1,n-2}\end{bmatrix}(P^{\bgam}_{n-2})^{-1}\begin{bmatrix}{\bf 
c} \\ p_{n-2,n-1}\end{bmatrix}\\
&=\gamma_{n-1}-{\bf c}^*(P^{\bgam}_{n-3})^{-1}{\bf c}-(\gamma_{n-2}-{\bf 
b}^*(P^{\bgam}_{n-3})^{-1}{\bf b})^{-1}\cdot|p_{n-1,n-2}-{\bf c}^*(P^{\bgam}_{n-3})^{-1}{\bf b}|^2\\
&>\gamma_{n-1}-{\bf c}^*(P^{\bgam}_{n-3})^{-1}{\bf c}-\frac{3}{q}\cdot|p_{n-1,n-2}-{\bf 
c}^*(P^{\bgam}_{n-3})^{-1}{\bf b}|^2>0.
\end{align*}
Therefore, $P^{\bgam}_{n-1}$ is positive definite. Finally, we have from \eqref{6.3}, \eqref{6.1} 
and \eqref{6.5},
\begin{align*}
&\det P^{\bgam}_{n-1,n-1}({\bf p}_n)\\
&=p_{n-1,n}\cdot \det \left(\begin{bmatrix}P^{\bgam}_{n-3} & {\bf b} \\
{\bf b}^* & \gamma_{n-2}\end{bmatrix}-\begin{bmatrix}{\bf d}\\ p_{n-2,n}\end{bmatrix}\begin{bmatrix}{\bf 
c}^* & p_{n-1,n-2}\end{bmatrix}p_{n-1,n}^{-1}\right)\\
&=p_{n-1,n}\cdot (\gamma_{n-2}-q-X)\cdot \det(P^{\bgam}_{n-3}-{\bf d}{\bf c}^*p_{n-1,n}^{-1})=0.
\end{align*}
where the last equality holds by the choice \eqref{6.6} of $\gamma_{n-2}$. By  formula \eqref{4.52},
$\Delta^{\bgam}_{n-1}=0$. Then formula \eqref{4.52}
will produce $f_{\bgam}\in\cB_{n-2}$ solving the problem \eqref{1.2}. This solution is not a constant
function since  $\f_{n-2}, \f_{n-1}, \f_n$ are all distinct.
\end{proof}

\section{Examples}
\setcounter{equation}{0}

In this section we illustrate Theorem \ref{T:4.5} by several particular examples where 
the parametrization formula \eqref{4.17} (or \eqref{4.52}) is particularly explicit in terms
of the interpolation data set. 
\subsection{Three-points problem} (cf. Example 3 in \cite{sw}). We want to find all
$f\in\cB_2$ satisfying conditions
\begin{equation}
f(t_i)=\f_i\quad (t_i,f_i\in\T, \; i=1,2,3).
\label{5.1}
\end{equation}
We exclude the trivial case where $f_1=f_2=f_3$.
By Theorem \ref{T:4.5a}, all solutions $f\in\cB_2$ to the problem
\eqref{5.1} are given by the formula
\begin{equation}
f^{\bgam}(z)=\f_3\cdot \frac{1-(1-z\overline{t}_3)\cdot \bigg(
{\displaystyle\frac{\Delta^{\bgam}_1}{1-z\overline{t}_1}}+
{\displaystyle\frac{\Delta^{\bgam}_2}{1-z\overline{t}_2}}\bigg)}
{1-(1-z\overline{t}_3)\cdot\bigg(
{\displaystyle\frac{\f_3\overline{\f}_1\Delta^{\bgam}_1}{1-z\overline{t}_1}}+
{\displaystyle\frac{\f_3\overline{\f}_2\Delta^{\bgam}_2}{1-z\overline{t}_2}}\bigg)},  
\label{5.2}
\end{equation}
where $\Delta^{\bgam}_1$ and $\Delta^{\bgam}_2$ are given, according to \eqref{4.50}, by
\begin{equation}
\Delta^{\bgam}_1=\frac{\gamma_{_2}p_{_{13}}-p_{_{12}}p_{_{23}}}{\det P_2^{\bgam}}\quad\mbox{and}\quad
\quad \Delta^{\bgam}_2=\frac{\gamma_{_1}p_{_{23}}-p_{_{21}}p_{_{13}}}{\det P_2^{\bgam}},
\label{5.3}
\end{equation}
where $p_{ij}=\frac{1-\f_i\overline{\f}_j}{1-t_i\overline{t}_j}$ and where
$\bgam=\{\gamma_1,\gamma_2\}$ is any admissible pair, i.e.,
$$
\gamma_1>0, \quad \gamma_2>0,\quad \gamma_1\gamma_2>|p_{12}|^2.
$$
Thus, any point $(\gamma_1,\gamma_2)$ in the first quadrant $\mathbb R_+^2$
above the graph of $y=|p_{12}|^2 x^{-1}$ gives rise via formula \eqref{5.2}
to a $\cB_2$-solution to the problem \eqref{5.1}. It follows immediately
by the winding number argument that there are no solutions of degree one
if $\f_1,\f_2,\f_3$ do not have the same orientation on $\T$ as $t_1,t_2,t_3$.
We can come to the same conclusion showing that in this case,
\begin{equation}
\Delta^{\bgam}_1\neq 0\quad\mbox{and}\quad\Delta^{\bgam}_2\neq 0
\label{5.4}
\end{equation}
for any admissible $\{\gamma_1,\gamma_2\}$. Indeed, if  $p_{13}=0$ (i.e., $\f_1=\f_3$), then
\eqref{5.4} holds since $p_{12}$, $p_{23}$ and $\gamma_1$ are all non-zero. Similarly,
\eqref{5.4} holds if $p_{23}=0$. If $p_{13}\neq 0$ and $p_{23}\neq 0$, then the numbers   
\begin{equation}
\widetilde{\gamma}_2:=\frac{p_{_{12}}p_{_{23}}}{p_{_{13}}}
=\frac{p_{_{12}}p_{_{23}}p_{_{31}}}{|p_{_{13}}|^2}\quad\mbox{and}\quad
\widetilde{\gamma}_1:=\frac{p_{_{21}}p_{_{13}}}{p_{_{23}}}=\frac{p_{_{21}}p_{_{13}}p_{_{32}}}{|p_{_{23}}|^2}
\label{5.5}
\end{equation}
are both non-positive, by Corollary \ref{C:5.2}. Hence, inequalities \eqref{5.4} hold for any
positive $\gamma_1,\gamma_2$ and therefore, there are no zero cancellations in \eqref{5.2}.
On the other hand, if  $\f_1,\f_2,\f_3$ have the same orientation on $\T$ as $t_1,t_2,t_3$, then
the numbers $\widetilde{\gamma}_2$ and $\widetilde{\gamma}_1$ in \eqref{5.5} are positive
(again, by Corollary \ref{C:5.2}). Observe that $\widetilde{\gamma}_1\widetilde{\gamma}_2=|p_{12}|^2$.
Therefore, any pair $(\gamma_1,\widetilde{\gamma}_2)$ with $\gamma_1>\widetilde{\gamma}_1$ is
admissible, and since
$$
\Delta^{\{\gamma_1,\widetilde{\gamma}_2\}}_1=0, \quad
\Delta^{\{\gamma_1,\widetilde{\gamma}_2\}}_2=\frac{\gamma_{_1}p_{_{23}}-p_{_{21}}p_{_{13}}}{
\gamma_{_1}\frac{p_{_{12}}p_{_{23}}}{p_{_{13}}}-|p_{_{12}}|^2}=\frac{p_{_{13}}}{p_{_{12}}},
$$
the formula \eqref{5.2} amounts to
\begin{equation}
f^{\{\gamma_1,\widetilde{\gamma}_2\}}(z)=\frac{(1-z\overline{t}_2)p_{_{12}}-(1-z\overline{t}_3)p_{_{13}}}
{(1-z\overline{t}_2)\overline{\f}_3p_{_{12}}-(1-z\overline{t}_3)\overline{\f}_2 p_{_{13}}}
\label{5.6}
\end{equation}
for any $\gamma_1>\widetilde{\gamma}_1$. On the other hand, any pair
$(\widetilde{\gamma}_1,\gamma_2)$ with $\gamma_2>\widetilde{\gamma}_2$ is admissible, and since now
$$
\Delta^{\{\widetilde{\gamma}_1,\gamma_2\}}_1=\frac{\gamma_{_2}p_{_{13}}-p_{_{12}}p_{_{23}}}{
\gamma_{_2}\frac{p_{_{21}}p_{_{13}}}{p_{_{23}}}-|p_{_{12}}|^2}=\frac{p_{_{23}}}{p_{_{21}}}
\quad\mbox{and}\quad  \Delta^{\{\widetilde{\gamma}_1,\gamma_2\}}_2=0,
$$
the formula \eqref{5.2} amounts to
\begin{equation}
f^{\{\widetilde{\gamma}_1,\gamma_2\}}(z)=\frac{(1-z\overline{t}_1)p_{_{21}}-(1-z\overline{t}_3)p_{_{23}}}
{(1-z\overline{t}_1)\overline{\f}_3p_{_{21}}-(1-z\overline{t}_3)\overline{\f}_1 p_{_{23}}}
\label{5.7}
\end{equation}
for any  $\gamma_2>\widetilde{\gamma}_2$. A straightforward verification shows that formulas
\eqref{5.6} and \eqref{5.7} define the same function (as expected, since the problem \eqref{5.1} has at
most one rational solution of degree one).
\subsection{Another example} We next consider the $n$-point problem \eqref{1.2} where all target values 
but one are equal to each other (we assume without loss of generality that this common value is $1$):
\begin{equation}
f(t_i)=1\quad (i=1,\ldots,n-1)\quad\mbox{and}\quad f(t_n)=\f_n\in\T\backslash\{1\}.
\label{3.3}
\end{equation}
\begin{proposition}
All functions $f\in\cB^\circ_{n-1}$ subject to interpolation conditions \eqref{3.3} are parametrized by the formula
\begin{equation}
f(z)=\f_n\cdot \frac{1-{\displaystyle\sum_{i=1}^{n-1}
\frac{(1-z\overline{t}_n)(1-\overline{\f}_n)}{(1-z\overline{t}_i)(1-t_i\overline{t}_n)\gamma_i}}}{1-
{\displaystyle\sum_{i=1}^{n-1}\frac{(1-z\overline{t}_n)(\f_n-1)\overline{\f}_i}
{(1-z\overline{t}_i)(1-t_i\overline{t}_n)\gamma_i}}},
\label{3.5}
\end{equation}
where positive numbers $\gamma_1,\ldots,\gamma_{n-1}$ are free parameters.
\label{P:5.1}
\end{proposition}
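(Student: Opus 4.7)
The plan is to specialize Theorem \ref{T:4.5a} to the interpolation data $\f_1=\cdots=\f_{n-1}=1$, $\f_n\in\T\setminus\{1\}$, exploiting the fact that the Pick matrix becomes diagonal. The main work is essentially a substitution, so the proof should be short.

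First I would compute the Pick matrix $P^{\bgam}_{n-1}$ defined by \eqref{4.2} for this data. Since $\f_i=\f_j=1$ for $i,j\le n-1$, every off-diagonal entry $p_{ij}=(1-\f_i\overline{\f}_j)/(1-t_i\overline{t}_j)$ vanishes. Hence $P^{\bgam}_{n-1}=\mathrm{diag}(\gamma_1,\ldots,\gamma_{n-1})$, which is positive definite iff each $\gamma_i>0$. Thus admissible tuples $\bgam$ are in bijection with $\mathbb{R}^{n-1}_{>0}$.

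Next, using $\f_i=1$, the column \eqref{4.47} becomes
\[
{\bf p}_n=\left[\frac{1-\overline{\f}_n}{1-t_i\overline{t}_n}\right]_{i=1}^{n-1},
\]
so $\boldsymbol{\Delta}^{\bgam}=(P^{\bgam}_{n-1})^{-1}{\bf p}_n$ has entries
\[
\Delta^{\bgam}_i=\frac{1-\overline{\f}_n}{\gamma_i(1-t_i\overline{t}_n)}.
\]
Plugging these into \eqref{4.52} and using the identity $\f_n(1-\overline{\f}_n)=\f_n-1$ in the denominator sum, one recovers formula \eqref{3.5} verbatim. Because $\f_n\ne 1$, every $\Delta^{\bgam}_i$ is nonzero, so Theorem \ref{T:4.5a} guarantees $\deg f_{\bgam}=n-1$, confirming $f_{\bgam}\in\cB^\circ_{n-1}$ and that conditions \eqref{3.3} are satisfied.

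For the converse, any $f\in\cB^\circ_{n-1}$ meeting \eqref{3.3} admits a representation $f=f_{\bgam}$ for some admissible $\bgam$ by the converse part of Theorem \ref{T:4.5}, and this representation is unique (since $\deg f=n-1$); translating via the equivalence of \eqref{4.52} and \eqref{4.17} shown in the proof of Theorem \ref{T:4.5a} gives the corresponding unique tuple in \eqref{3.5}. There is no genuine obstacle here: the whole content is that the diagonality of $P^{\bgam}_{n-1}$ collapses the general formula \eqref{4.52} into the simple closed form \eqref{3.5}.
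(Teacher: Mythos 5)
Your proposal is correct and follows essentially the same route as the paper: diagonality of $P^{\bgam}_{n-1}$, the explicit computation $\Delta^{\bgam}_i=p_{i,n}/\gamma_i\neq 0$, substitution into \eqref{4.52}, and the converse part of Theorem \ref{T:4.5} for completeness of the parametrization. The identity $\f_n(1-\overline{\f}_n)=\f_n-1$ you invoke is exactly the simplification needed, so nothing is missing.
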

\begin{proof} For the data set as in \eqref{3.3}, $p_{i,j}=0$ for all $1\le i\neq j\le n-1$. Therefore, the matrix 
$P_{n-1}^{\bgam}$
is diagonal and $\bgam=\{\gamma_1,\ldots,\gamma_{n-1}\}$ is admissible if and only if $\gamma_i>0$ ($1\le i\le n-1$).
Furthermore, the numbers \eqref{4.50} are equal to
$$
\Delta^{\bgam}_i=\frac{p_{i,n}}{\gamma_i}=\frac{1-\f_i\overline{\f}_n}{\gamma_i(1-t_i\overline{t}_n)}=
\frac{1-\overline{\f}_n}{\gamma_i(1-t_i\overline{t}_n)} \quad\mbox{for}\quad i=1,\ldots,n-1.
$$
Substituting the latter formulas in \eqref{4.52} gives \eqref{3.5}. By Theorem \ref{T:4.5}, formula 
\eqref{3.5} parametrizes all $\cB_{n-1}$-solutions to the problem \eqref{3.3}. However, since $\Delta^{\bgam}_i\neq 0$
for all $i=1,\ldots,n-1$, it follows that $\deg f=n-1$  for any $f$ of the form \eqref{3.5}. 
\end{proof}
\begin{remark}
{\rm Letting $\f_n=1$ in \eqref{3.5} we see that the only $f\in\cB_{n-1}$ subject to equalities
$f(t_i)=1$ for $i=1,\ldots,n$, is the constant function $f(z)\equiv 1$.}
\label{R:6.2}
\end{remark}
We now show how to get formula \eqref{3.5} using the 
approach from \cite{bes, est} as follows.  Let $f\in\cB^\circ_{n-1}$ satisfy conditions 
\eqref{3.3} and let $\gamma_i=|f^\prime(t_i)|$ ($1\le i\le n-1$). Then $f^{-1}(\{1\})=\{t_1,\ldots,t_{n-1}\}$
and the Aleksandrov-Clark measure $\mu_{f,1}$ of $f$ at $1$ is the sum of $n$ point masses $\gamma_i^{-1}$ at $t_i$.
Therefore,
\begin{equation}
\frac{1+f(z)}{1-f(z)}=\int_\T \frac{\zeta+z}{\zeta-z} \, d\mu_{f,1}(\zeta)+ic 
=\sum_{i=1}^{n-1}\frac{1}{\gamma_i}\cdot \frac{t_i+z}{t_i-z}+ic
\label{3.2}
\end{equation}
for some $c\in\mathbb R$. Solving \eqref{3.2} for $f$ and letting $\cE=\frac{ic-1}{ic+1}$ 
(note that $\cE\in\T\backslash\{1\}$) we get
\begin{equation}
f(z)=\frac{(1-\Phi(z))\cE+\Phi(z)}{-\Phi(z)\cE+1+\Phi(z)},\quad\mbox{where}\quad
\Phi(z)=\frac{1}{2}\cdot\sum_{i=1}^{n-1}\frac{1}{\gamma_i}\cdot \frac{t_i+z}{t_i-z}.
\label{3.1}
\end{equation}
Evaluating \eqref{3.1} at $z=t_n$ and making use of the last condition in \eqref{3.3} we get
$$
\f_n=\frac{(1-\Phi(t_n))\cE+\Phi(t_n)}{-\Phi(t_n)\cE+1+\Phi(t_n)}\; \;  \Longleftrightarrow \; \; 
\cE=\frac{(1+\Phi(t_n))\f_n-\Phi(t_n)}{\Phi(t_n)\f_n+1-\Phi(t_n)}.
$$
Substituting the latter expression for $\cE$ into \eqref{3.1} leads us to the representation 
$$
f(z)=\frac{\f_n+(\Phi(z)-\Phi(t_n))(1-\f_n)}{1+(\Phi(z)-\Phi(t_n))(1-\f_n)}
$$
which is the same as \eqref{3.5}, since $|\f_n|=1$ and since according to \eqref{3.1},
$$
\Phi(z)-\Phi(t_n)=\frac{1}{2}\cdot\sum_{i=1}^{n-1}\frac{1}{\gamma_i}\cdot \left(
\frac{t_i+z}{t_i-z}-\frac{t_i+t_n}{t_i-t_n}\right)
=\sum_{i=1}^{n-1}\frac{1-z\overline{t}_n}{\gamma_i\, (1-z\overline{t}_i)(1-t_i\overline{t}_n)}.
$$
\subsection{Boundary fixed points} By Theorem \ref{T:1.1}, there are infinitely many finite Blaschke products 
$f\in\cB_{n-1}$ with given fixed boundary points $t_1,\ldots,t_n\in\T$, i.e., such that 
\begin{equation}
f(t_i)=t_i\quad \mbox{for}\quad i=1,\ldots,n.
\label{3.4}
\end{equation}
We will use Theorem \ref{T:4.5} to parametrize all such  Blaschke products.
Since $\f_i=t_i$ for $i=1,\ldots,n$, we have $p_{i,j}=1$ for all $1\le i\neq j\le n$. 
By definition \eqref{4.2}, 
\begin{equation}
P^{\bgam}_{n-1}=\Gamma+EE^*,\quad\mbox{where}\quad \Gamma=\begin{bmatrix} \gamma_1-1&&0 \\ &\ddots & \\ 0 && 
\gamma_{n-1}-1\end{bmatrix}, \; \; E=\begin{bmatrix}1 \\ \vdots \\ 1\end{bmatrix}.
\label{3.6}
\end{equation}
If the tuple $\bgam=\{\gamma_1,\ldots,\gamma_{n-1}\}$ contains two elements $\gamma_i\le 1$ and 
$\gamma_j\le 1$, then it is not admissible since the principal submatrix $\sbm {\gamma_1 & 1 \\ 1 
&\gamma_2}$ of $P^{\bgam}_{n-1}$ is not positive definite. Otherwise, that is, in one of the three 
following cases, the tuple $\bgam$ is admissible:
\begin{enumerate}
\item $\gamma_i>1$ for all $i\in\{1,\ldots,n-1\}$. 
\item $\gamma_\ell=1$ and $\gamma_i>1$ for all $i\neq \ell$.
\item $\gamma_\ell<1$, \quad $\gamma_i>1$ for all $i\neq \ell$, and $\det \, 
P^{\bgam}_{n-1}>0$.
\end{enumerate}
{\bf Cases 1\&3:} Since $\gamma_i\neq 0$ for $i=1,\ldots,n$, the matrix $\Gamma$ is invertible. By basic 
properties of determinants,
$$
\det P^{\bgam}_{n-1}=\det \Gamma\cdot\det (I+\Gamma^{-1}EE^*)=\det \Gamma\cdot (1+E^*\Gamma^{-1}E),
$$
which, on account \eqref{3.6}, implies
\begin{equation}
\det P^{\bgam}_{n-1}=\bigg(1+\sum_{i=1}^{n-1}\frac{1}{\gamma_i-1}\bigg)\cdot\prod_{i=1}^{n-1}(\gamma_i-1).
\label{3.7}   
\end{equation}
Using the latter formula, we can characterize Case 3 as follows:
\begin{equation}
\gamma_\ell<1, \quad \gamma_i>1 \quad\mbox{for all}\quad i\neq \ell, 
\quad\mbox{and}\quad \sum_{i=1}^{n-1}\frac{1}{\gamma_i-1}<-1.
\label{3.7a}
\end{equation}
Since $p_{i,n}=1$ for all $1\le i\le n-1$, we have  ${\bf p}_n=E$ in \eqref{4.47}, and hence,
$$
\det P_{n-1,j}^{\bgam}(E)=\lim_{\gamma_j\to 1}\det P^{\bgam}_{n-1}=\prod_{i\neq j}(\gamma_j-1)
\quad\mbox{for}\quad j=1,\ldots,n-1.
$$
Substituting the two latter formulas in \eqref{4.50} we get
\begin{equation}
\Delta^{\bgam}_j=\frac{\det P_{n-1,j}^{\bgam}(E)}{\det P_{n-1}^{\bgam}}
=\frac{1}{\bigg(1+{\displaystyle\sum_{i=1}^{n-1}\frac{1}{\gamma_i-1}}\bigg)\cdot (\gamma_j-1)}\qquad
(j=1,\ldots,n-1).
\label{3.8}   
\end{equation}
Substituting \eqref{3.8} into \eqref{4.52} (with $\f_i=t_i$ for $i=1,\ldots,n-1$) leads us 
(after straightforward algebraic manipulations) to the formula
\begin{equation}
f(z)=\frac{t_n+{\displaystyle\sum_{i=1}^{n-1}
\frac{z(1-t_n\overline{t}_i)}{(1-z\overline{t}_i)(\gamma_i-1)}}}{1+
{\displaystyle\sum_{i=1}^{n-1}\frac{1-t_n\overline{t}_i}   
{(1-z\overline{t}_i)(\gamma_i-1)}}}.
\label{3.9}
\end{equation}
Since $\Delta^{\bgam}_j\neq 0$ for all $j=1,\ldots,n-1$, it follows by Theorem \ref{T:4.5} that 
$f$ is a Blaschke product of degree $n-1$ and $|f^\prime(t_i)|=f^\prime(t_i)=\gamma_i$ for 
$i=1,\ldots,n-1$. Differentiating \eqref{3.9} and evaluating the 
obtained formula for $f^\prime(z)$ at $z=t_n$ gives
$$
\gamma_n:=f^\prime(t_n)=\bigg(\sum_{i=1}^{n-1}\frac{1}{\gamma_i-1}\bigg)\bigg
(1+\sum_{i=1}^{n-1}\frac{1}{\gamma_i-1}\bigg)^{-1},
$$
which can be equivalently written as
\begin{equation}
\frac{\gamma_n}{1-\gamma_n}=\sum_{i=1}^{n-1}\frac{1}{\gamma_i-1}\quad\mbox{or}\quad
\sum_{i=1}^{n}\frac{1}{\gamma_i-1}=-1.
\label{3.10}
\end{equation}  
It follows from \eqref{3.10} that in Case 1, $0<\gamma_n<1$, so that $t_n$ is the 
(hyperbolic) Denjoy-Wolff point of $f$.
Alternatively, this conclusion follows from the Cowen-Pommerenke result \cite{CP}: {\em the inequality 
\begin{equation}
\sum_{i=1}^{n-1}\frac{1}{f^{\prime}(t_i)-1}\le \frac{f^{\prime}(t_n)}{1-f^{\prime}(t_n)}
\label{3.11}
\end{equation}
for any analytic $f: \, \D\to\D$ with boundary fixed points $t_1,\ldots,t_{n-1}$ and 
the (hyperbolic) boundary fixed point $t_n$, and equality prevails in \eqref{3.11} if and only if 
$f\in\cB^\circ_{n-1}$}. Observe, that in Case 3, the point $t_\ell$ is the Denjoy-Wolff point of $f$,
while $t_n$ is a regular boundary fixed point with $\gamma_n=f^\prime(t_n)>1$.

\smallskip
\noindent
{\bf Case 2:} Direct computations show that in this case,
$$
\det P^{\bgam}_{n-1}=\prod_{j\neq \ell}(\gamma_j-1)=\det P_{n-1,\ell}^{\bgam}(E)\quad\mbox{and}
\quad \det P_{n-1,i}^{\bgam}(E)=0 \quad (i\neq \ell).
$$
Therefore, according to \eqref{4.50}, $\Delta^{\bgam}_\ell=1$ and 
$\Delta^{\bgam}_i=0 $ for all $i\neq \ell$, which being substituted into \eqref{4.52}, gives
$$
f(z)=t_n\cdot \frac{1-\frac{1-z\overline{t}_n}{1-z\overline{t}_\ell}}
{1-\frac{1-z\overline{t}_n}{1-z\overline{t}_\ell}\overline{t}_\ell t_n}=z.
$$
We summarize the preceding analysis in the next proposition.
\begin{proposition}
All functions $f\in\cB^\circ_{n-1}$ satisfying conditions \eqref{3.3} are given by 
the formula \eqref{3.9}, where the parameter $\bgam=\{\gamma_1,\ldots,\gamma_{n-1}\}$ 
is either subject to relations \eqref{3.7a} (in which case $t_\ell$ is the Denjoy-Wolff point of
$f$) or $\gamma_i>1$ for all $i$ (in which case $t_n$ is the Denjoy-Wolff point of 
$f$).
\label{P:5.8}
\end{proposition}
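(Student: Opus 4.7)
The plan is to read Proposition \ref{P:5.8} as a consolidation of the three-case admissibility analysis carried out immediately before its statement. With $\f_i = t_i$ the Pick matrix takes the form $P_{n-1}^{\bgam} = \Gamma + EE^*$ from \eqref{3.6}, so the first task is to argue that $P_{n-1}^{\bgam} > 0$ forces exactly one of Cases 1--3 to hold. If two indices $i \neq j$ both satisfied $\gamma_i, \gamma_j \le 1$, then the $2\times 2$ principal submatrix $\sbm{\gamma_i & 1 \\ 1 & \gamma_j}$ would fail to be positive definite; so at most one $\gamma_\ell \le 1$ is allowed, and the determinant identity \eqref{3.7} refines the case $\gamma_\ell < 1$ to exactly \eqref{3.7a}. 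This trichotomy exhausts the admissible tuples.

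Next I would discard Case 2 from the set of parameters producing $\cB_{n-1}^\circ$-solutions: the computation already recorded shows that $\gamma_\ell = 1$ forces $f_{\bgam}(z) = z$, which lies in $\cB_1^\circ$ rather than $\cB_{n-1}^\circ$ when $n \ge 3$. In the remaining Cases 1 and 3, substitution of \eqref{3.8} into \eqref{4.52} yields \eqref{3.9}, and because every $\Delta_j^{\bgam}$ is nonzero (since $\gamma_j \neq 1$ throughout), Theorem \ref{T:4.5}(3) guarantees $\deg f_{\bgam} = n-1$. For the converse inclusion, Theorem \ref{T:4.5} applied to any $f \in \cB_{n-1}^\circ$ satisfying \eqref{3.4} produces an admissible $\bgam$ with $f = f_{\bgam}$, which by the dichotomy above must fall into Case 1 or Case 3.

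Finally I would identify the Denjoy-Wolff point: in Case 3 the inequality $\gamma_\ell = |f_{\bgam}^\prime(t_\ell)| < 1$ identifies $t_\ell$ as the unique boundary fixed point with derivative less than one, hence the Denjoy-Wolff point. In Case 1, differentiating \eqref{3.9} at $z = t_n$ yields $f^\prime(t_n) = S(1+S)^{-1} \in (0,1)$ with $S = \sum_{i=1}^{n-1}(\gamma_i-1)^{-1} > 0$, which pins $t_n$ as the Denjoy-Wolff point and simultaneously recovers the Cowen-Pommerenke equality \eqref{3.10}. The only genuine calculation is this differentiation at $t_n$, which is routine; the main conceptual step is the admissibility trichotomy, and everything else is bookkeeping that assembles the preceding Cases 1--3 into the stated dichotomy.
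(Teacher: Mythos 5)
Your proposal is correct and follows essentially the same route as the paper: the admissibility trichotomy via the $2\times 2$ principal submatrices and the determinant formula \eqref{3.7}, the elimination of Case 2 as yielding $f(z)=z$, the substitution of \eqref{3.8} into \eqref{4.52} with all $\Delta^{\bgam}_j\neq 0$ forcing degree $n-1$ via Theorem \ref{T:4.5}, the converse via the parametrization theorem, and the Denjoy--Wolff identification through the sign of $f'(t_\ell)$ and the computation $f'(t_n)=S(1+S)^{-1}$ recovering \eqref{3.10}. No gaps worth noting.
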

In particular, it follows that the identity mapping $f(z)=z$ is the only function in $\cB_{n-2}$
satisfying conditions \eqref{3.4}. In fact, a more general result in \cite{aa} asserts that 
if the $n$-point Nevanlinna-Pick problem \eqref{1.2} (with $t_i,w_i\in\C$) has a rational solution of 
degree $k<n/2$ (in the setting of \eqref{3.4}, $k=1$), then it does not have other rational solutions of degree
less than $n-k$. 

\section{Concluding remarks and open questions}
\setcounter{equation}{0}

Some partial results on the problem \eqref{1.2} can be derived from general results on rational interpolation.
Let $[x]$ denote the integer part of $x\in\mathbb R$ (the greatest integer not exceeding a given $x$).
If we let
\begin{equation}
q={\rm rank}\left[\frac{w_{r+j}-w_i}{t_{r+i}-t_j}\right]_{i,j=1}^r={\rm
rank}\left[\frac{1-w_i\overline{w}_{r+j}}{1-t_i\overline{t}_{r+j}}\right]_{i,j=1}^r,\qquad 
r=\left[\frac{n}{2}\right],
\label{6.0a}
\end{equation}
then, by a result from \cite{aa}, there are no rational functions $f$ of degree less than $q$ satisfying conditions
\eqref{1.2}. If $q\le \frac{n-1}{2}$, then there is at most one rational $f$ of degree equal $q$ satisfying 
conditions \eqref{1.2}, and the only candidate can be found in the form 
\begin{equation}
f(z)=\frac{a_0+a_1z+\ldots +a_qz^q}{b_0+b_1z+\ldots +b_qz^q}
\label{6.1a}
\end{equation}
by solving the linear system 
$$
a_0+a_1t_i+\ldots +a_qt_i^q=w_i(b_0+b_1t_i+\ldots +b_qt_i^q),\qquad i=1,\ldots,n.
$$
If there are no zero cancellations in the representation \eqref{6.1a}, $f$ satisfies all conditions in 
\eqref{1.2}. This $f$ is a finite Blaschke product if and only if it has no poles in $\overline{D}$ and 
$b_i=\overline{a}_{q-i}$ for $i=1,\ldots,n$. Alternatively, $f\in\cB_q^\circ$ if and only if the boundary 
Schwarz-Pick matrix $P^f(t_1,\ldots,t_n)$ (see \eqref{2.1}) is positive semidefinite. Another result
from from \cite{aa} states that the next possible degree of a rational solution to the problem \eqref{1.2} is 
$n-q$ and moreover, there are infinitely many rational solutions of degree $k$ for each $k\ge n-q$. 
Simple examples show that finite Blaschke product solutions of degree $n-q$ may not exist, so that 
the result from \cite{aa} provides a {\em lower bound} for the minimally possible degree of a Blashke product 
solution. On the other hand (see e.g.,  \cite{sw}), if the problem \eqref{1.2} has a solution
in $\cB^\circ_{\kappa_0}$ for $\kappa_0>\frac{n-1}{2}$, then it has infinitely solutions in $\cB^\circ_k$ 
for each $k\ge \kappa_0$. Hence, the procedure verifying whether or not the problem \eqref{1.2} has a unique
minimal degree Blaschke product solution (necessarily, $\deg f\le \frac{n-1}{2}$) is simple. The question
of some interest is to characterize the latter determinate case in terms of the original interpolation data set.  
A much more interesting question is:

\smallskip
\noindent
{\bf Question 1:} {\em Find the minimally possible $\kappa_0>\frac{n-1}{2}$ so that the problem \eqref{1.2} has 
a solution in $\cB^\circ_{\kappa_0}$. For each $k\ge \kappa$, parametrize all  $\cB^{\circ}_k$-solutions to the 
problem.} 

\medskip

In \cite{glad,glad1}, the boundary problem \eqref{1.2} was considered in the set $\mathcal{QB}_{n-1}$
of  rational functions of degree at most $n-1$ that are unimodular on $\T$. Observe that any element 
of  $\mathcal{QB}_{n-1}$ is equal to the ratio of two coprime Blashke products $g,h$ such that 
$\deg g+\deg h\le n-1$. In general, it not true that a rational function $f$ of degree less than $n$
and taking unimodular values at $n$ points on $\T$ is necessarily unimodular on $\T$ 
(by Theorem \ref{T:1.1}, this is true, if a'priori, $f$ is subject to $|f(t)|\le 1$ for all $t\in\T$).
Hence, the results concerninng low-degree unimodular interpolation do not follow directly from the general results on the 
unconstrained rational interpolation. However, we were not able to find an example providing the negative answer for the next 
question:

\smallskip
\noindent
{\bf Question 2:} {\em Let $q$ be defined as in \eqref{6.0a}, so that there are infinitely many rational
functions $f$, $\deg f=n-q$, satisfying conditions \eqref{1.2}. Is it true that some (and therefore,
infinitely many) of them are unimodular on $\T$?}

\smallskip

We finally reformulate the boundary Nevanlinna-Pick problem \eqref{1.2} in terms of a positive semidefinite 
matrix completion problem. With $t_1,\ldots,t_n$ and $\f_1,\ldots,\f_n$ in hands,
we specify all off-diagonal entries in the matrix $P_n$:
\begin{equation}
P_n=\begin{bmatrix}* & \frac{1-\f_1\overline{\f}_2}{1-t_1\overline{t}_2} & \ldots &
\frac{1-\f_1\overline{\f}_{n-1}}{1-t_1\overline{t}_{n-1}} &
\frac{1-\f_1\overline{\f}_n}{1-t_1\overline{t}_n}\\
\frac{1-\f_2\overline{\f}_1}{1-t_2\overline{t}_1} & * & \ldots &
\frac{1-\f_2\overline{\f}_{n-1}}{1-t_2\overline{t}_{n-1}}
&\frac{1-\f_2\overline{\f}_n}{1-t_2\overline{t}_n}\\
\vdots & \vdots & \ddots & \vdots & \vdots \\
\frac{1-\f_{n-1}\overline{\f}_{1}}{1-t_{n-1}\overline{t}_{1}}&
\frac{1-\f_{n-1}\overline{\f}_{2}}{1-t_{n-1}\overline{t}_{2}}&
\ldots & * & \frac{1-\f_{n-1}\overline{\f}_n}{1-t_{n-1}\overline{t}_n}\\
\frac{1-\f_{n}\overline{\f}_{1}}{1-t_{n}\overline{t}_{1}}&
\frac{1-\f_{n}\overline{\f}_{2}}{1-t_{n}\overline{t}_{2}}&\ldots &
\frac{1-\f_{n}\overline{\f}_{n-1}}{1-t_{n}\overline{t}_{n-1}} & *\end{bmatrix}.
\label{2.6}
\end{equation}
Every choice of the (ordered) set $\bgam=\{\gamma_1,\ldots,\gamma_n\}$ of real diagonal entries
produces a Hermitian completion of $P_n$ which we have denoted by  $P_n^{\bgam}$ in \eqref{4.2}.
Completion question related to the problem \eqref{1.2}  and to a similar problem in the class 
$\mathcal{QB}_{n-1}$ are the following:

\smallskip
\noindent
{\bf Question 3:} {\em Given a partially specified matrix $P_n$ \eqref{2.6}, find a positive 
semidefinite completion $P_n^{\bgam}$ with minimal possible rank}.

\medskip

The same question concerning finding the minimal rank {\em Hermitian} completion $P_n^{\bgam}$
is related to minimal degree boundary interpolation by unimodular functions. Once the minimal rank
completion is found, the finite Blaschke product $f$ with the boundary Schwarz-Pick matrix $P^f({\bf t})=P_n^{\bgam}$
will be the minimal degree solution to the problem \eqref{1.2}. Although Question 3 looks like an exercise on linear algebra, 
it turns out as difficult as the original interpolation problem.

\end{document}